\documentclass{amsart}

\usepackage{amscd}
\usepackage{amssymb}

\usepackage{color}

\usepackage{amsthm}



\newtheorem{thm}[equation]{Theorem}
\newtheorem{lem}[equation]{Lemma}
\newtheorem{lemma}[equation]{Lemma}
\newtheorem{cor}[equation]{Corollary}
\newtheorem{prop}[equation]{Proposition}

\newtheorem*{thm*}{Theorem}
\newtheorem*{prop*}{Proposition}
\newtheorem*{cor*}{Corollary}
\newtheorem*{lem*}{Lemma}
\newtheorem*{MT*}{Main Theorem}

\newtheorem*{ques*}{Question}

%
%

\theoremstyle{definition} %

\newtheorem*{defn*}{Definition}

\theoremstyle{remark} %

\newtheorem*{rmk*}{Remark}
\newtheorem*{rmks*}{Remarks}

\DeclareMathOperator{\Spin}{Spin}           

\DeclareMathOperator{\SL}{SL}
\DeclareMathOperator{\GL}{GL}

\DeclareMathOperator{\HSpin}{HSpin}

\newcommand{\SO}{\mathrm{SO}}

\newcommand{\Ga}{\mathbb{G}_\mathrm{a}}
\newcommand{\Gm}{\mathbb{G}_\mathrm{m}}

\DeclareMathOperator{\Lie}{Lie}
\DeclareMathOperator{\Ad}{Ad}
\newcommand{\ot}{\otimes}

\newcommand{\Z}{\mathbb{Z}}

\newcommand{\e}{\varepsilon}
\newcommand{\s}{\sigma}
\newcommand{\la}{\lambda}


\newcommand{\injects}{\hookrightarrow}

\DeclareMathOperator{\car}{char}
\DeclareMathOperator{\ed}{ed}
\DeclareMathOperator{\res}{res}
\DeclareMathOperator{\Spec}{Spec}
\DeclareMathOperator{\rank}{rank}

\renewcommand{\sl}{{\mathfrak{sl}}}

\renewcommand{\sl}{{\mathfrak{sl}}}
\newcommand{\so}{{\mathfrak{so}}}

\newcommand{\spin}{\mathfrak{spin}}
\newcommand{\hspin}{\mathfrak{hspin}}

\newcommand{\gbar}{\overline{g}}

\newcommand{\lie}{\mathfrak{g}}
\newcommand{\lsub}{\mathfrak{h}}

\newcommand{\trans}{\top}

\usepackage{hyperref}
\hypersetup{
  colorlinks=true, 
   linkcolor=red,  
}
\usepackage[hyphenbreaks]{breakurl}

\newcommand{\g}{\mathfrak{g}}

%
%
\renewcommand{\t}{\mathfrak{t}}
\newcommand{\cc}{\mathfrak{c}}
\newcommand{\Gt}{\widetilde{G}}
\newcommand{\gt}{\tilde{\g}}
\newcommand{\Gtb}{\widetilde{\mathbf{G}}}

\numberwithin{equation}{section}

\begin{document}

\title{Spinors and essential dimension}
\author{Skip Garibaldi} 
\author{Robert M. Guralnick}
\thanks{Guralnick was partially supported by NSF grants DMS-1265297 and DMS-1302886.}

\dedicatory{With an appendix by Alexander Premet}


\begin{abstract}
We prove that spin groups act generically freely on various spinor modules, in the sense of group schemes and in a way that does not depend on the characteristic of the base field.  As a consequence, we extend the surprising calculation of the essential dimension of spin groups and half-spin groups in characteristic zero by Brosnan--Reichstein--Vistoli (Annals of Math., 2010) and Chernousov--Merkurjev (Algebra \& Number Theory, 2014) to fields of characteristic different from 2.  We also complete the determination of generic stabilizers in spin and half-spin groups of low rank.
\end{abstract}


\subjclass[2010]{Primary 11E72; Secondary 11E88, 20G10}

\maketitle

\section{Introduction}

The essential dimension of an algebraic group $G$ is, roughly speaking, the number of parameters needed to specify a $G$-torsor.
Since the notion was introduced in \cite{BuhRei} and \cite{RY}, there have been many papers calculating the essential dimension of various groups, such as \cite{MR1992016}, \cite{ChSe}, \cite{MR2358058},  \cite{MR2393078},   \cite{GiRei},   \cite{M:PGLp2}, \cite{MR2979508}, \cite{MR3039772},   etc.  (See \cite{M:bbk}, \cite{M:ed}, or \cite{Rei:ICM} for a survey of the current state of the art.)  For connected groups, the essential dimension of $G$ tends to be less than the dimension of $G$ as a variety; for semisimple adjoint groups this is well known\footnote{See \cite{GG:edp} for a proof that works regardless of the characteristic of the field.}.  Therefore, the discovery by Brosnan--Reichstein--Vistoli in \cite{BRV} that the essential dimension of the spinor group $\Spin_n$ grows exponentially as a function of $n$ (whereas $\dim \Spin_n$ is quadratic in $n$), was startling.  Their results, together with refinements for $n$ divisible by 4 in \cite{M:chile} and \cite{ChM:edspin}, determined the essential dimension of $\Spin_n$ for $n > 14$ if $\car k = 0$.  One goal of the present paper is to extend this result to all characteristics except $2$.

\subsection*{Generically free actions}
The source of the characteristic zero hypothesis in \cite{BRV} is that the upper bound relies on a fact about the action of spin groups on spinors that is only available in the literature in case the field $k$ has characteristic zero.  Recall that a group $G$ acting on a vector space $V$ is said to act \emph{generically freely} if there is a dense open subset $U$ of $V$ such that, for every $K \supseteq k$ and every $u \in U(K)$, the stabilizer in $G$ of $u$ is the trivial group scheme.  We prove:

\begin{thm} \label{MT}
Suppose  $n > 14$.  Then $\Spin_n$ acts generically freely on 
the spin representation if $n \equiv 1, 3 \bmod 4$; a half-spin representation if $n \equiv 2 \bmod 4$; or a direct sum of the vector representation and a half-spin representation if $n \equiv 0 \bmod 4$.
Furthermore, if $n \equiv 0 \bmod 4$ and $n \ge 20$, then $\HSpin_n$ acts generically freely on a half-spin representation.
\end{thm}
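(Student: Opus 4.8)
The plan is to reduce all four assertions to two statements about the stabilizer $G_v$ of a generic vector $v$, where $(G,V)$ is the relevant pair: $G_v$ is finite, and $G_v(\bar k)=1$. These together force $G_v=1$ as a group scheme: for a linear action one always has $\dim G_v\le\dim_k\Lie(G_v)$, so $\Lie(G_v)=0$ already makes $G_v$ finite, and then its identity component is a local group scheme whose tangent space at the identity vanishes, hence is trivial, so $G_v$ is finite étale; a finite étale group scheme with no nontrivial geometric points is trivial. Note that in each case the chosen representation is faithful on the chosen group --- for $n\equiv 0\bmod 4$ this is precisely why one must add the vector summand, respectively pass to $\HSpin_n$ --- so a nontrivial $g\in G(\bar k)$ has $V^g\subsetneq V$. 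Thus it suffices to prove: (i) $\Lie(G_v)=0$ for generic $v$; and (ii) $G_v(\bar k)=1$ for generic $v$; the common dense open locus where both hold is where $G$ acts freely.

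For (i) I would produce, in each case, an explicit test vector $v_0$ --- a sum of a modest family of torus weight vectors, chosen just large enough to kill all infinitesimal symmetry --- and verify by a direct computation that $\{X\in\Lie(G):X\cdot v_0=0\}=0$; upper semicontinuity of $v\mapsto\dim\Lie(G_v)$ then gives (i) on a dense open set. For $n$ large this is comfortable, since $\dim V$ is exponential in $n$ while $\dim\Lie(G)=\binom{n}{2}$, but the computation must be run uniformly in $\car k$, and the delicate low-rank and small-characteristic instances --- where $\Lie(\Spin_n)$, resp. $\Lie(\HSpin_n)$, differs from $\so_n$ --- are exactly what I would isolate and treat separately (this is the role I would expect the appendix to play).

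For (ii), which I expect to be the main difficulty, I would argue by a codimension count. The locus $B\subseteq V$ of vectors with nontrivial geometric stabilizer is contained in $\bigcup_g V^g$, the union over $g\in G(\bar k)$ of prime order not acting as the identity on $V$ (a nontrivial finite étale $G_v$ contains such a $g$). Grouping by conjugacy class, the closure of $\bigcup_{g'\sim g}V^{g'}$ has dimension at most $\dim[g]+\dim V^g=(\dim G-\dim C_G(g))+\dim V^g$. One first reduces to finitely many classes: semisimple elements of prime order $\ell\ne\car k$ below a bound depending on $n$, together with the finitely many unipotent classes of order $\car k$ (when $\car k=p>2$), since for semisimple $g$ of larger order both $V^g$ and $C_G(g)$ are small and the needed inequality is immediate. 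For each remaining class it then suffices to check
\[
\dim V-\dim V^g \;>\; \dim G-\dim C_G(g),
\]
which makes $B$ a proper closed subset whose complement is a dense open $G$-stable set on which $G$ acts freely.

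The hard part is this inequality for the few large-centralizer elements. For semisimple $g$ the dimensions of $V^g$ and of $C_G(g)$ agree with their characteristic-zero values --- the weights of the (half-)spin module and their multiplicities, and the root subsystem fixed by $g$, are the same in every characteristic --- so those cases can be read off from the characteristic-zero analysis of Brosnan--Reichstein--Vistoli and Chernousov--Merkurjev, building on known tables of generic stabilizers; the genuinely new input is the behaviour of unipotent elements $g$ of order $p$, which have no characteristic-zero analogue and for which $V^g=\ker\bigl((g-1)|_V\bigr)$ must be bounded directly from the Jordan type of $g$ on the (half-)spin module. The tightest instances --- involutions of $\Spin_n$ lying over $\operatorname{diag}(-1,\dots,-1,1,\dots,1)\in\SO_n$, with centralizer of type $S(\mathrm{O}_{2k}\times\mathrm{O}_{n-2k})$, and the smallest unipotent classes --- are where the bound above barely fails and must be sharpened: instead of $\dim V^g$ one uses the dimension of a generic $C_G(g)$-orbit inside $V^g$, exploiting that $C_G(g)$ must also move $v$. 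This refinement is what brings the threshold down to $n>14$ (and to $n\ge 20$ for $\HSpin_n$), while $n=16$ for $\HSpin_{16}$ remains genuinely exceptional, precisely because there the relevant inequality degenerates to an equality.
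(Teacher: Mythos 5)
Your architecture is the paper's: split the generic stabilizer into its Lie algebra and its geometric points, kill each on a dense open set, and conclude the group scheme is trivial; and your codimension criterion $\dim V^g+\dim g^G<\dim V$ over prime-order classes is exactly Lemma \ref{mother.lem} (the paper uses the Lie-algebra analogue \eqref{mother} for nilpotent and toral $x$ in place of your explicit test vector for most $n$, reserving the explicit-vector-plus-semicontinuity computation for characteristic $2$ and small $n$, as you anticipate). So the skeleton is right, and for $n>20$ the crude bounds $\dim V^g\le\frac34\dim V$ and $\dim g^G\le\dim G-\rank G$ already finish, as in \S\ref{large.sec}. The divergence is in how the tight cases $15\le n\le 20$ are closed, which is where essentially all of the content of the theorem lives. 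The paper does not use your refinement by generic $C_G(g)$-orbits inside $V^g$; it instead sharpens $\dim V^g$ itself, by viewing $g$ inside $\Spin_8\times\Spin_{n-8}$ (or $x$ inside $\so_9\times\so_9$) and exploiting triality together with the tensor-product structure of the half-spin module to bound individual eigenspaces (Proposition \ref{espaces} and the $n=18,20$ analysis). More importantly, $n=15,16,17,19$ are not handled by a direct count at all but by restriction from $\HSpin_{16}$, $\HSpin_{18}$, $\HSpin_{20}$, using the grading $\Lie(E_8)=\hspin_{16}\oplus V$ (Lemma \ref{Spin16}) and the fact that a generic conjugate of a finite subgroup meets a proper closed subgroup only in the center (Lemma \ref{finite}).

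That last point is where your plan, as written, would actually get stuck rather than merely be laborious: for $\Spin_{16}$ and $\HSpin_{16}$ on a half-spin representation the generic stabilizer is a nontrivial finite group scheme (Theorem \ref{D8}), so the inequality genuinely fails, with equality, for certain order-$2$ classes; the same classes intersect $\Spin_{15}$, and it is not clear that your orbit refinement recovers $n=15,16$ --- the restriction trick via Lemma \ref{finite} is what the paper uses to evade this. Two smaller inaccuracies: the semisimple fixed-space data cannot simply be "read off" from Brosnan--Reichstein--Vistoli or Chernousov--Merkurjev, which cite Popov's characteristic-zero generic-stabilizer results rather than containing such an analysis (though your underlying principle is sound and the paper does invoke the characteristic-zero result to dispose of semisimple elements in its characteristic-$2$ section); and the characteristic-$2$ unipotent analysis you correctly identify as the new input requires the classification of involution classes in $\SO_{2n}$ and the bound $\dim V^g\le\frac58\dim V$ for non-root involutions, which your plan names but does not supply. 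In short: right strategy, but the theorem's difficulty is concentrated precisely in the steps you have labelled rather than executed, and the one concrete new device you propose is not the one that works in the hardest cases.
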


(We also compute the stabilizer of a generic vector for the values of $n$ not covered by Theorem \ref{MT}.  See below for precise statements.)

Throughout, we write $\Spin_n$ for the split spinor group, which is the simply connected cover (in the sense of linear algebraic groups) of the split group $\SO_n$.  To be precise, the \emph{vector representation} is the map $\Spin_n \to \SO_n$, which is uniquely defined up to equivalence unless $n = 8$.  For $n$ not divisible by 4, the kernel $\mu_2$ of this representation is the unique central $\mu_2$ subgroup of $\Spin_n$.  

For $n$ divisible by 4, the natural action of $\Spin_n$ on the spinors is a direct sum of two inequivalent representations, call them $V_1$ and $V_2$, each of which is called a \emph{half-spin} representation.  The center of $\Spin_n$ in this case contains two additional copies of $\mu_2$, namely the kernels of the half-spin representations $\Spin_n \to \GL(V_i)$, and we write $\HSpin_n$ for the image of $\Spin_n$ (the isomorphism class of which does not depend on $i$).  For $n \ge 12$, $\HSpin_n$ is not isomorphic to $\SO_n$.

Theorem \ref{MT} is known under the additional hypothesis that $\car k = 0$, see \cite[Th.~1]{AndreevPopov} for $n \ge 29$ and \cite{APopov} for $n \ge 15$.  The proof below is independent of the characteristic zero results, and so gives an alternative proof.

We note that Guerreiro proved that the generic stabilizer in the Lie algebra $\spin_n$, acting on a (half) spin representation, is central for $n = 22$ and $n \ge 24$, see Tables 6 and 9 of \cite{Guerreiro}.  At the level of group schemes, this gives the weaker result that the generic stabilizer is finite \'etale.  Regardless, we recover these cases quickly, see \S\ref{large.sec}; the longest part of our proof concerns the cases $n = 18$ and 20.

\subsection*{Generic stabilizer in $\Spin_n$ for small $n$}
For completeness, we list the stabilizer in $\Spin_n$ of a generic vector for $6 \le n \le 14$ in Table \ref{stabilizers}.   The entries for $n \le 12$ and $\car k \ne 2$ are from \cite{Igusa}; see sections \ref{small.2}--\ref{D7.2.sec} below for the remaining cases.  The case $n = 14$ is particularly important due to its relationship with the structure of 14-dimensional quadratic forms with trivial discriminant and Clifford invariant (see \cite{Rost:14.1}, \cite{Rost:14.2}, \cite{G:lens}, and \cite{M:invrr}), so we calculate the stabilizer in detail in that case.
\newcommand{\same}{\text{same}}
\begin{table}[tph]
\begin{center}
\[
\begin{array}{c|cc||c|cc} 
n&\car k \ne 2&\car k = 2&n&\car k \ne 2&\car k = 2 \\ \hline
6&(\SL_3)\cdot(\Ga)^3&\same& 
11&\SL_5&\SL_5 \rtimes \Z/2
\\
7&G_2&\same&
12&\SL_6&\SL_6 \rtimes \Z/2
\\
8&\Spin_7&\same&
13&\SL_3 \times \SL_3 &(\SL_3 \times \SL_3) \rtimes \Z/2
\\
9&\Spin_7&\same&
14&G_2 \times G_2&(G_2 \times G_2)\rtimes \Z/2
\\
10&(\Spin_7)\cdot (\Ga)^8&\same
\\
\end{array}
\]
\caption{Stabilizer sub-group-scheme in $\Spin_n$ of a generic vector in an irreducible (half) spin representation for small $n$.} \label{stabilizers}
\end{center}
\end{table}

For completeness, we also record the following.
\begin{thm} \label{D8}
Let $k$ be an algebraically closed field.  The stabilizer in $\HSpin_{16}$ of a generic vector in a half-spin representation is isomorphic to $(\Z/2)^4 \times (\mu_2)^4$.
\end{thm}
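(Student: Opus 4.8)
The plan is to read off the generic stabilizer from the exceptional embedding $\HSpin_{16}\hookrightarrow E_8$. Over any field, the subsystem subgroup of $E_8$ of type $D_8$ is $\HSpin_{16}$ (it has fundamental group $\Z/2$, namely $Q(E_8)/Q(D_8)$, and acts faithfully on a $128$-dimensional minuscule module), and grading the $E_8$-roots by membership in the $D_8$-subsystem yields a $\Z/2$-grading $\Lie(E_8)=\so_{16}\oplus\Delta$ with $\so_{16}=\Lie(\HSpin_{16})$, $\Delta$ a half-spin representation, and $[\Delta,\Delta]\subseteq\so_{16}$. Because $g\in\HSpin_{16}$ acts on $v\in\Delta\subseteq\Lie(E_8)$ through the restriction of $\Ad$, the stabilizer is $(\HSpin_{16})_v=\HSpin_{16}\cap C_{E_8}(v)$ as group schemes. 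So the whole problem is to identify $C_{E_8}(v)$, and its intersection with $\HSpin_{16}$, for a suitably generic $v\in\Delta$; since the $\HSpin_{16}$-saturation of a Cartan subspace $\cc\subseteq\Delta$ (a maximal subspace of commuting semisimple elements) is dense in $\Delta$, it is enough to take $v$ generic in such a $\cc$, in which case $(\HSpin_{16})_v=C_{\HSpin_{16}}(\cc)$.

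Suppose first $\car k\neq2$. The relevant symmetric pair $(E_8,D_8)$ is split --- it is the one whose real form is the split group $E_{8(8)}$, of real rank $8$ --- so $\cc$ has dimension $8=\rank E_8$ and is a Cartan subalgebra of $\Lie(E_8)$. A generic $v\in\cc$ is therefore regular semisimple in $\Lie(E_8)$, so $C_{E_8}(v)=T$ is the maximal torus with $\Lie T=\cc$. The involution $\theta$ with $E_8^{\theta}=\HSpin_{16}$ (it exists in this characteristic, and $E_8^{\theta}$ is connected by Steinberg's theorem since $E_8$ is simply connected) preserves $T$ with $d\theta=-\mathrm{id}$ on $\Lie T$, which forces $\theta$ to act on $T$ as inversion; hence $\HSpin_{16}\cap T=T^{\theta}=T[2]\cong(\Z/2)^8$. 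As $\mu_2\cong\Z/2$ here, this is $(\Z/2)^4\times(\mu_2)^4$, and it is the full generic stabilizer because $C_{\HSpin_{16}}(v)=\HSpin_{16}\cap C_{E_8}(v)=\HSpin_{16}\cap T$ already for generic $v\in\cc$. Passing from characteristic zero to an odd characteristic $p$ is harmless: it only requires the discriminant of $\Lie(E_8)$ restricted to a suitable integral model of $\cc$ to stay nonzero mod $p$, which holds for every odd $p$.

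Now suppose $\car k=2$; this is the main obstacle. The group scheme $(\Z/2)^4\times(\mu_2)^4$ is not smooth --- its Lie algebra $\so_{16}\cap C_{\Lie(E_8)}(v)=C_{\so_{16}}(v)$ must be $4$-dimensional for generic $v$ --- so the split picture above necessarily degenerates: one expects the Cartan subspace inside $\Delta$ to drop to dimension $4$, with $C_{E_8}(v)$ no longer a torus. I would fix a convenient $v$ (a sum of weight vectors, or the mod-$2$ reduction of a well-chosen integral vector) and then: (i) compute $C_{\so_{16}}(v)$ directly --- with a Chevalley basis of $\Lie(E_8)$, or inside $\Spin_{16}$ using the Clifford algebra --- and show it is a $4$-dimensional toral subalgebra, so that $(\HSpin_{16})_v^{\circ}$ exponentiates to a rank-$4$ multiplicative group $(\mu_2)^4$; (ii) exhibit four commuting order-$2$ elements of $\HSpin_{16}$ fixing $v$ that are independent modulo $(\HSpin_{16})_v^{\circ}$ (images of suitable sign-change or torus-normalizer elements of the Clifford group), and rule out further components, so that the component group is $(\Z/2)^4$; (iii) check that the extension $(\mu_2)^4\to(\HSpin_{16})_v\to(\Z/2)^4$ splits with trivial action; and (iv) confirm $v$ is generic, i.e. that $\dim(\HSpin_{16})_v=0$ (equivalently $\dim(\HSpin_{16}\cdot v)=120$) and $|(\HSpin_{16})_v|=2^8$, so that upper semicontinuity of the stabilizer dimension leaves no room for anything larger. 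The hard part is precisely this characteristic-$2$ bookkeeping: pinning down the infinitesimal part of the stabilizer, proving that the toral subalgebra inside $\Delta$ really has dimension $4$ and not $8$, and excluding any extra étale or infinitesimal stabilizer; the characteristic-zero answer supplies the target order $2^8$ but not the group-scheme structure in characteristic $2$.
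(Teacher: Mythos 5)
Your characteristic-$\ne 2$ argument is essentially the paper's (Lemma \ref{Spin16}): realize the half-spin module as the $(-1)$-eigenspace of an involution of $E_8$ that inverts a maximal torus $T$, take $v$ generic in $\Lie(T)$, conclude $C_{E_8}(v)=T$, hence the stabilizer is $T^\theta=T[2]\cong(\Z/2)^8$, with trivial infinitesimal part because $\Lie(T)$ meets $\Lie(\HSpin_{16})$ trivially. That half is fine, modulo the citations one needs in positive characteristic to know that the \emph{scheme-theoretic} centralizer of a generic $\tau\in\Lie(T)$ is exactly $T$ (the paper quotes SGA3 and \cite[Prop.~2.3]{GG:edp} for this) and that $G\cdot\Lie(T)$ is dense in $V$.

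The characteristic-$2$ case is where the content of the theorem lies, and there your proposal stops at a to-do list: items (i)--(iv) are precisely the statements that must be proved, and you produce neither the vector $v$, nor the $4$-dimensional toral subalgebra, nor the genericity. In the paper this occupies the entire appendix (by Premet), and the missing constructions are substantial. (a) One needs an explicit set of eight pairwise orthogonal half-spin roots $\gamma_1,\dots,\gamma_8$ (read off from the rows of the Sylvester--Hadamard matrix $H_8$) whose coroots $h_{\gamma_i}$ span only a $4$-dimensional, maximal totally isotropic subspace $\t_0$ of $\t$ for the mod-$2$ symplectic form $\langle h_\alpha,h_\beta\rangle=(\alpha\,\vert\,\beta)\bmod 2$; this lattice computation is where the drop from $8$ to $4$ is actually \emph{explained}, not merely predicted from non-smoothness. (b) One needs a substitute for the Cartan-subspace argument: Premet takes the $16$-dimensional span $\mathfrak{r}$ of the $e_{\pm\gamma_i}$ and proves $\psi\colon G\times\mathfrak{r}\to V$ is dominant by a fiber-dimension estimate, using that $x^{[2]}\in\t_0$ for $x\in\mathfrak{r}$ and that $\cc_{\g}(\t_0)=\t$ forces the fiber into $N_G(T)\times\mathfrak{r}$. (c) One needs the determination of $G_x$ inside $N_G(T)=W\ltimes T$ via the action of $W$ on the rows $\pm R_8$ of $H_8$, which is what excludes extra components and pins the reduced stabilizer down to $(\Z/2)^4$, together with the decomposition $\Z\Gamma=\Lambda_1\oplus 2\Lambda_2$ of lattices that identifies the infinitesimal part as $(\mu_2)^4$. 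Without these (or an equivalent explicit computation) the characteristic-$2$ half of the theorem is unproved; your closing sentence concedes as much.
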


The proof when $\car k \ne 2$ is short, see Lemma \ref{Spin16}.  The case of $\car k = 2$ is treated in an appendix by Alexander Premet.  (Eric Rains has independently proved this result.)

\subsection*{Essential dimension}
We recall the definition of essential dimension.  For an extension $K$ of a field $k$ and an element $x$ in the Galois cohomology set $H^1(K, G)$, we define $\ed(x)$ to be the minimum of the transcendence degree of $K_0/k$ for $k \subseteq K_0 \subseteq K$ such that $x$ is in the image of $H^1(K_0, G) \to H^1(K, G)$.  The \emph{essential dimension} of $G$, denoted $\ed(G)$, is defined to be $\max \ed(x)$ as $x$ varies over all extensions $K/k$ and all $x \in H^1(K, G)$.  There is also a notion of \emph{essential $p$-dimension} for a prime $p$.  The essential $p$-dimension $\ed_p(x)$ is the minimum of $\ed(\res_{K'/K} x)$ as $K'$ varies over finite extensions of $K$ such that $p$ does not divide $[K':K]$, where $\res_{K'/K} \!: H^1(K, G) \to H^1(K', G)$ is the natural map.  The essential $p$-dimension of $G$, $\ed_p(G)$, is defined to be the minium of $\ed_p(x)$ as $K$ and $x$ vary; trivially, $\ed_p(G) \le \ed(G)$ for all $p$ and $G$, and $\ed_p(G) = 0$ if for every $K$ every element of $H^1(K, G)$ is killed by some finite extension of $K$ of degree not divisible by $p$.  

Our Theorem \ref{MT} gives upper bounds on the essential dimension of $\Spin_n$ and $\HSpin_n$ regardless of the characteristic of $k$.  Combining these with the results of \cite{BRV}, \cite{M:chile}, \cite{ChM:edspin}, and \cite{Loetscher:fiber} quickly gives the following, see \S\ref{ed.sec} for details.

\begin{cor} \label{ed}
For $n > 14$ and $\car k \ne 2$,  
\[
\ed_2(\Spin_n) = \ed(\Spin_n) = \begin{cases}
2^{(n-1)/2} - \frac{n(n-1)}2 & \text{if $n \equiv 1,3 \bmod{4}$;} \\
2^{(n-2)/2} - \frac{n(n-1)}2 & \text{if $n \equiv 2 \bmod{4}$; and} \\
2^{(n-2)/2} - \frac{n(n-1)}2 + 2^m & \text{if $n \equiv 0 \bmod{4}$}
\end{cases}
\]
where $2^m$ is the largest power of $2$ dividing $n$ in the final case.  For $n \ge 20$ and divisible by $4$, 
\[
\ed_2(\HSpin_n) = \ed(\HSpin_n) = 2^{(n-2)/2} - \frac{n(n-1)}2.
\] 
\end{cor}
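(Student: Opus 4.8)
The plan is to sandwich each of the four essential dimensions in the statement between an upper bound coming from Theorem~\ref{MT} and a lower bound taken from \cite{BRV}, \cite{M:chile}, \cite{ChM:edspin}, and \cite{Loetscher:fiber}, and then to observe that the two bounds coincide. For the upper bounds I would use the standard, characteristic-free fact (see \cite{BRV} or the survey \cite{M:ed}) that if a linear algebraic group $G$ acts generically freely on a vector space $V$, then over a dense open $U \subseteq V$ the quotient $U \to U/G$ is a $G$-torsor whose generic fibre is a versal $G$-torsor over $k(U/G)$, a field of transcendence degree $\dim V - \dim G$ over $k$; hence $\ed(G) \le \dim V - \dim G$. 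Applying this to the representations supplied by Theorem~\ref{MT} gives at once $\ed(\Spin_n) \le 2^{(n-1)/2} - \tfrac{n(n-1)}2$ for $n \equiv 1,3 \bmod 4$, $\ed(\Spin_n) \le 2^{(n-2)/2} - \tfrac{n(n-1)}2$ for $n \equiv 2 \bmod 4$, and $\ed(\HSpin_n) \le 2^{(n-2)/2} - \tfrac{n(n-1)}2$ for $n \equiv 0 \bmod 4$ with $n \ge 20$.

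For $\Spin_n$ with $n \equiv 0 \bmod 4$ this crude argument is not quite sharp: Theorem~\ref{MT} uses the vector representation together with a half-spin representation, of total dimension $n + 2^{(n-2)/2}$, and so yields only $\ed(\Spin_n) \le n + 2^{(n-2)/2} - \tfrac{n(n-1)}2$, which equals the asserted value exactly when $n$ is a power of $2$ (in particular for $n = 16$) but is otherwise too large. For the remaining values ($4 \mid n$, $n \ge 20$, $n$ not a power of $2$) I would factor through the central $\mu_2$-isogeny $\Spin_n \to \HSpin_n$ and combine the fiber-dimension estimate of \cite{Loetscher:fiber} with the essential-dimension computations of \cite{ChM:edspin} (cf.\ \cite{M:chile}) to obtain $\ed(\Spin_n) \le \ed(\HSpin_n) + 2^m$, where $2^m$ is the largest power of $2$ dividing $n$; together with the bound on $\ed(\HSpin_n)$ already in hand (valid since $n \ge 20$) this gives $\ed(\Spin_n) \le 2^{(n-2)/2} - \tfrac{n(n-1)}2 + 2^m$.

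For the lower bounds I would cite \cite{BRV}, \cite{M:chile}, and \cite{ChM:edspin}, which establish $\ed_2(\Spin_n) \ge 2^{\lfloor(n-1)/2\rfloor} - \tfrac{n(n-1)}2$ — with the extra summand $2^m$ when $4 \mid n$ — and $\ed_2(\HSpin_n) \ge 2^{(n-2)/2} - \tfrac{n(n-1)}2$ for $4 \mid n$, $n \ge 20$. Each of these arguments produces the bound by exhibiting a finite $2$-subgroup of $\Spin_n$ (resp.\ $\HSpin_n$) — an elementary abelian one for the leading term, slightly enlarged when $4 \mid n$ — and invoking the Karpenko--Merkurjev theorem identifying the essential $2$-dimension of a finite $2$-group with the minimal dimension of a faithful representation; one needs only to note that this uses nothing about $k$ beyond $\car k \ne 2$. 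Since $\ed_2(G) \le \ed(G)$ for all $G$, and for each group and each residue class of $n$ in the statement the displayed integer is simultaneously an upper bound for $\ed$ and a lower bound for $\ed_2$, all four quantities must equal it.

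The step I expect to cause the most trouble is the upper bound for $\Spin_n$ when $4 \mid n$ and $n$ is not a power of $2$. A faithful representation of $\Spin_n$ must contain two irreducible constituents with distinct nontrivial central characters, and the smallest representations realizing the three nontrivial central characters are the $n$-dimensional vector representation and the two half-spin representations; hence every generically free linear representation of $\Spin_n$ has dimension at least $n + 2^{(n-2)/2}$, which strictly exceeds $2^m + 2^{(n-2)/2}$. Thus Theorem~\ref{MT} cannot by itself deliver the sharp bound, and the fiber-dimension and essential-dimension machinery of \cite{Loetscher:fiber} and \cite{ChM:edspin} is indispensable here; a subsidiary, routine issue is checking that the lower-bound arguments in the characteristic-zero references survive verbatim under the sole hypothesis $\car k \ne 2$.
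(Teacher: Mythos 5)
Your proposal matches the paper's proof in all essentials: the upper bounds come from the generically free representations of Theorem~\ref{MT} via $\ed(G)\le\dim V-\dim G$, the sharp bound for $\Spin_n$ with $4\mid n$ is obtained by passing through the isogeny to $\HSpin_n$ using the argument of \cite[Th.~2.2]{ChM:edspin} with \cite{Loetscher:fiber} supplying the stacky inequality in positive characteristic, and the lower bounds on $\ed_2$ are quoted from \cite{BRV} and \cite{M:chile} before sandwiching with $\ed_2\le\ed$. The only cosmetic difference is that the paper invokes the direct-sum bound only for $n=16$ and the isogeny argument for all $4\mid n$ with $n\ge 20$, whereas you split the cases according to whether $n$ is a power of~$2$; the content is the same.
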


Although Corollary \ref{ed} is stated and proved for split groups, it quickly implies analogous results for non-split forms of these groups, see \cite[\S4]{Loetscher:fiber} for details.

Combining the corollary with the calculation of $\ed(\Spin_n)$ for $n \le 14$ by Markus Rost in \cite{Rost:14.1} and \cite{Rost:14.2} (see also \cite{G:lens}), we find for $\car k \ne 2$:
\[
\begin{array}{c|rrrrrrrrrrrrrrrr} 
n&5&6&7&8&9&10&11&12&13&14&15&16&17&18&19&20 \\ \hline
\ed(\Spin_n)&0&0&4&5&5&4&5&6&6&7&23&24&120&103&341&326
\end{array}
\]

\subsection*{Notation}
Let $G$ be an affine group scheme of finite type over a field $k$, which we assume is algebraically closed and of characteristic different from 2.  (If $G$ is additionally smooth, then we say that $G$ is an \emph{algebraic group}.)  If $G$ acts on a variety $X$, the stabilizer $G_x$ of an element $x \in X(k)$ is a sub-group-scheme of $G$ with $R$-points
\[
G_x(R) = \{ g \in G(R) \mid gx = x \}
\]
for every $k$-algebra $R$.

If $\Lie(G) = 0$ then $G$ is finite and \'etale.  If additionally $G(k) = 1$, then $G$ is the trivial group scheme $\Spec k$.

For a representation $\rho \!: G \to \GL(V)$ and elements $g \in G(k)$ and $x \in \Lie(G)$, we denote the fixed spaces by $V^g := \ker(\rho(g) - 1)$ and $V^x := \ker( \mathrm{d}\rho(x))$.

We use fraktur letters such as $\g$, $\spin_n$, etc., for the Lie algebras $\Lie(G)$, $\Lie(\Spin_n)$, etc.

{\small\subsection*{Acknowledgements} We thank Alexander Merkurjev and Zinovy Reichstein for helpful comments, and for posing the questions answered in Proposition \ref{D7.2} and Theorem \ref{MT}.}

\section{Fixed spaces of elements}

Fix some $n \ge 6$.
Let $V$ be a (half) spin representation for $\Spin_n$, of dimension $2^{\lfloor (n-1)/2 \rfloor}$.

\begin{prop} \label{espaces}
For $n \ge 6$: 
\begin{enumerate}
\item \label{espaces.lie} For all noncentral $x \in \spin_n$, $\dim V^x \le \frac34 \dim V$.  
\item \label{espaces.grp} For all noncentral $g \in \Spin_n$, $\dim V^g \le \frac34 \dim V$.
\end{enumerate}
If $n > 8$,  $\car k \ne 2$, and $g \in \Spin_n$ is noncentral semisimple, then $\dim V^g \le \frac58 \dim V$.
\end{prop}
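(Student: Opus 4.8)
The plan is to reduce everything to a computation of eigenspace dimensions on weights of the (half) spin representation, carried out separately for elements of prime order (or toral elements in the Lie algebra) and then bootstrapped to arbitrary elements. For part \eqref{espaces.lie}, I would first observe that since $k$ is algebraically closed, any noncentral $x \in \spin_n$ can be written as $x = x_s + x_n$ with $x_s$ semisimple and $x_n$ nilpotent and commuting; since $V^x \subseteq V^{x_s}$ it suffices to bound $\dim V^{x_s}$, so I may assume $x$ lies in a maximal toral subalgebra $\t$. Writing the weights of $V$ as $\frac12(\pm e_1 \pm \cdots \pm e_m)$ (with $m = \lfloor n/2 \rfloor$), the fixed space $V^x$ is spanned by the weight vectors $v_\lambda$ with $\lambda(x) = 0$. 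So I need: for any nonzero $x \in \t$ that is noncentral (i.e. not killed by all these weights — equivalently $x \neq 0$ when $n$ is odd, and $x$ not in the one-dimensional center when $n$ is even), at most $\frac34$ of the $2^m$ sign-vectors $\epsilon \in \{\pm 1\}^m$ satisfy $\sum_i \epsilon_i x_i = 0$, where $x = \sum x_i h_i$. This is an elementary but slightly delicate combinatorial fact about vanishing of signed sums; the worst case is when all but one coordinate $x_i$ is zero, giving exactly half, or when $x$ is supported on two coordinates with $x_1 = \pm x_2$, which still gives only half — so in fact one expects $\frac12$ generically and the $\frac34$ bound is comfortable. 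In characteristic $p$ one must be slightly careful: the relevant statement is about the additive characters $\epsilon \mapsto \sum \epsilon_i x_i$ taking the value $0$ in $k$, and one checks that a nonzero linear functional on $\{\pm1\}^m$ (which, over a field of characteristic $\ne 2$, is detected on the Fourier side) cannot vanish on more than half the points unless it is identically zero, which would force $x$ central.

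For part \eqref{espaces.grp}, the same strategy applies to $g \in \Spin_n(k)$: decompose $g = g_s g_u$ (Jordan decomposition), use $V^g \subseteq V^{g_s}$ to reduce to $g$ semisimple, conjugate into a maximal torus $T$, and compute $V^g$ as the span of weight vectors $v_\lambda$ with $\lambda(g) = 1$. Identifying $T$ with $(\Gm)^m$ (up to the isogeny defining $\Spin_n$), a character $\lambda$ evaluates on $g = (t_1,\dots,t_m)$ to a half-integral monomial $\prod t_i^{\epsilon_i/2}$; passing to a square root $s_i$ with $s_i^2 = t_i$ turns the condition $\lambda(g)=1$ into $\prod s_i^{\epsilon_i} = \pm 1$ with a fixed sign, which is again a coset condition in the group $\{\pm1\}^m$ under the homomorphism $\epsilon \mapsto \prod s_i^{\epsilon_i}$ landing in $k^\times$. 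The fiber of a group homomorphism from $(\Z/2)^m$ has size a power of $2$ dividing $2^m$, hence at most $2^{m-1}$ unless the homomorphism is trivial; triviality of $\epsilon \mapsto \prod s_i^{\epsilon_i}$ forces every $s_i = \pm 1$, i.e. every $t_i = 1$, i.e. $g$ is central. So again $\dim V^g \le \frac12 \dim V$ with plenty of room, except one must double-check the case $n$ even where the center is bigger and the kernel $\mu_2$ of the half-spin representation must be excluded — but a noncentral $g$ by hypothesis is not in that kernel, so this is fine.

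The additional claim — that for $n > 8$, $\car k \neq 2$, noncentral semisimple $g$ gives $\dim V^g \le \frac58 \dim V$ — requires a finer count. Here I would not be content with "at most half the sign-vectors," but would argue that if the homomorphism $\phi: \epsilon \mapsto \prod s_i^{\epsilon_i}$ (notation as above) has kernel of size exactly $2^{m-1}$ — the borderline case giving $\dim V^g = \frac12 \dim V < \frac58 \dim V$, which is already fine — then we are done, so the only danger is kernel of size $2^{m-1}$... wait, that gives $\frac12$, which satisfies the bound. Let me reconsider: the bound $\frac58$ is \emph{weaker} than $\frac34$, so the additional statement is genuinely saying something stronger than parts (1)--(2) in the semisimple case — no: $\frac58 < \frac34$, so it IS stronger. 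The point must be that when $g$ is semisimple the fiber count can be pushed below $\frac12$ in all but finitely many configurations, OR that the real content is ruling out $\dim V^g = \frac12\dim V$ entirely for $n>8$. Concretely, $\dim V^g = \frac12 \dim V$ happens iff $\phi$ has a kernel of index $2$, which means the relation $\prod_{i \in S} s_i^2 = 1$ (equivalently $\prod_{i \in S} t_i = 1$) holds for exactly the subsets $S$ in a hyperplane of $(\Z/2)^m$; one shows this rigidity forces $g$ to be central modulo the torus of a smaller rank subgroup, and for $n > 8$ such a $g$ either is central or has $\dim V^g$ strictly smaller — in fact the next possible value is $\frac38$ or the count lands between $\frac12$ and $\frac58$ only through contributions I must tally by hand for the small exceptional toral configurations. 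The main obstacle, and the place I expect to spend the most effort, is exactly this last part: the crude "kernel of a $(\Z/2)^m$-homomorphism" bound gives $\frac12$ for free but proving $\le \frac58$ requires identifying precisely which semisimple classes achieve $\dim V^g$ close to $\frac12 \dim V$ and checking that for $n>8$ none of them exceed $\frac58$ — this is a finite check organized by the "defect" of the character $\phi$, using $\car k \ne 2$ to ensure $|{\ker \phi}|$ is genuinely a power of two and that no characteristic-$p$ coincidences inflate the fixed space.
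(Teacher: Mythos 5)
Your proposal has two genuine gaps, and they sit exactly where the work of the proposition lies. First, the Jordan-decomposition reduction fails: the containments $V^x \subseteq V^{x_s}$ and $V^g \subseteq V^{g_s}$ give no information when the semisimple part is central, which is precisely what happens for a noncentral nilpotent $x$ (where $x_s=0$) or unipotent $g$ (where $g_s=1$). So your plan simply never treats nilpotent and unipotent elements — yet these are the extremal cases: root elements attain $\dim V^x = \frac34 \dim V$ (already visible for $n=6$, where $\spin_6\cong\sl_4$ and a root element has a $3$-dimensional kernel on the $4$-dimensional module). The paper's proof spends most of its effort exactly here, degenerating an arbitrary nilpotent (or noncentral semisimple) element to a root element via orbit closures and then bounding root elements by restriction to $\so_{n-1}$ and induction; none of this is replaced by anything in your outline.

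Second, the toral counting that you do carry out is incorrect. The map $\epsilon \mapsto \prod_i s_i^{\epsilon_i}$ (resp.\ $\epsilon\mapsto \sum_i \epsilon_i x_i$) is \emph{not} a homomorphism (resp.\ character) of $(\Z/2)^m$ unless all $s_i^2=1$, so fixed spaces and eigenspaces are not fibers of a group homomorphism, and the conclusion ``at most half of the weights unless the element is central'' is false. Concretely, for $n=8$ triality converts the vector representation into a half-spin one: the semisimple element of $\spin_8$ corresponding to $(a/2,a/2,a/2,a/2)$ in the Cartan subalgebra kills $6$ of the $8$ half-spin weights, and likewise there is a noncentral semisimple $g\in\Spin_8$ acting on a half-spin representation as $\mathrm{diag}(a,1,1,1,1,1,1,a^{-1})$, with fixed space of dimension $6=\frac34\cdot 8$; this is exactly why the $\frac58$ statement excludes $n=8$. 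Moreover, if your $\le\frac12$ bound for noncentral semisimple elements were correct, the $\frac58$ claim would be vacuous — the tension you noticed mid-argument is a symptom of this error, not a bookkeeping quirk. The genuine content of the $\frac58$ bound (and of the $\frac34$ bound for semisimple elements in general) is obtained in the paper by writing $g$ through $\Spin_8\times\Spin_{n-8}$ (or $\Spin_{n-2}\times\Spin_2$ for the induction) and using the explicit triality formulas to control eigenspace dimensions in the tensor factorization of $V$; your ``finite check organized by the defect of the character'' has no mechanism standing in for this, since the object being analyzed is not a character in the first place.
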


In the proof, in case $\car k \ne 2$, we view $\SO_n$ as the group of matrices
\[
\SO_n(k) = \{ A \in \SL_n(k) \mid SA^{\trans}S = A^{-1} \},
\]
where $S$ is the matrix 1's on the ``second diagonal'', i.e., $S_{i,n+1-i} = 1$ and the other entries of $S$ are zero.  The intersection of the diagonal matrices with $\SO_n$ are a maximal torus.  For $n$ even, one finds elements of the form $(t_1, t_2, \ldots, t_{n/2}, t_{n/2}^{-1}, \ldots, t_1^{-1})$, and we abbreviate these as $(t_1, t_2, \ldots, t_{n/2}, \ldots)$.  Explicit formulas  for a triality automorphism $\s$ of $\Spin_8$ of order 3 are given in \cite[\S1]{G:iso}, and for $g = (t_1, t_2, t_3, t_4, \ldots) \in \Spin_8$ the elements $\s(g)$ and $\s^2(g)$ have images in $\SO_8$
\begin{gather} \label{triality}
\e \left( \sqrt{\tfrac{t_1 t_2 t_3}{t_4}}, \sqrt{\tfrac{t_1 t_4 t_2}{t_3}}, 
\sqrt{ \tfrac{t_1 t_3 t_4}{t_2}}, \sqrt{\tfrac{t_1}{t_2t_3t_4}} , \ldots \right) \quad \text{and} \\
  \e \left( \sqrt{t_1 t_2 t_3 t_4}, \sqrt{\tfrac{t_1 t_2}{t_3 t_4}}, 
\sqrt{\tfrac{t_1t_3}{t_2t_4}}, \sqrt{\tfrac{t_2 t_3}{t_1 t_4}}, \ldots\right), \notag
\end{gather}
where $\e = \pm 1$ is the only impecision in the expression.

\begin{proof}
For \eqref{espaces.lie}, in the Jordan decomposition $x = s + n$ where $s$ is semisimple, $n$ is nilpotent, and $[s,n] = 0$, we have $V^x \subseteq V^s \cap V^n$, so it suffices to prove \eqref{espaces.lie} for $x$ nilpotent or semisimple.

Suppose first that $x$ is a root element.  If $n = 6$, then $\spin_n \cong \sl_4$ and $V$ is the natural representation of $\sl_4$, so  we have the desired equality.
For $n > 6$,  the module restricted to $\so_{n-1}$ is
either irreducible or the direct sum of two half spins and so the result
follows.

If $x$ is nonzero nilpotent, then we may replace $x$ by a root element in the closure of $(\Ad G)x$.  If $x$ is noncentral semisimple, choose a root subgroup $U_\alpha$ of $\SO_n$ belonging to a Borel subgroup $B$ such that $x$ lies in $\Lie(B)$ and does not commute with $U_\alpha$.  Then for all $y \in \Lie(U_\alpha)$ and all scalars $\la$, $x+\la y$ is in the same $\Ad(\SO_n)$-orbit as $x$ and $y$ is in the closure of the set of such elements; replace $x$ with $y$.  If $x$ is nonzero nilpotent, then root elements are in the closure of $\Ad(G)x$. 

(In case $n$ is divisible by 4, the natural map $\spin_n \to \hspin_n$ is an isomorphism on root elements.  It follows that for noncentral $x \in \hspin_n$, $\dim V^x \le \frac34 \dim V$ by the same argument.)

\smallskip

For \eqref{espaces.grp}, 
we may assume that $g$ is unipotent or semisimple. 
If $g$ is unipotent, then by taking closures, we may pass to root elements
and argue as for $x$ in the Lie algebra.

If $g$ is semisimple,  we actually prove a slightly stronger result:
\emph{all} eigenspaces have dimension at most $\frac34 \dim V$.

Suppose now that $n$ is even.  The image of $g$ in $\SO_n$ can be viewed as an element of $\SO_{n-2} \times \SO_2$, where it has eigenvalues $(a,a^{-1})$ in $\SO_2$.  Replacing if necessary $g$ with a multiple by an element of the center of $\Spin_n$, we may assume that $g$ is in the image of $\Spin_{n-2} \times \Spin_2$. 
Then  $V = V_1 \oplus V_2$ where the $V_i$ are distinct half spin modules for
$\Spin_{n-2}$ and the $\Spin_2$ acts on each (since they are distinct
and $\Spin_2$ commutes with $\Spin_{n-2}$).   By induction every eigenspace
of $g$  has dim at most $\frac34 \dim V_i$ and the $\Spin_2$ component of $g$ acts as
a scalar, so this is preserved.

If $n$ is odd, then the image of $g$ in $\SO_n$ has eigenvalue 1 on the natural module, so is contained in a $\SO_{n-1}$ subgroup.  Replacing if necessary $g$ with $gz$ for some $z$ in the center of $G$, we may assume that $g$ is in the image of $\Spin_{n-1}$ and the claim follows by induction.

\smallskip

For the final claim, view $g$ as an element in the image of $(g_1, g_2) \in \Spin_8 \times \Spin_{n-8}$.  
The result is clear unless in some 8-dimensional image of $\Spin_8$, $g_1$ has diagonal image $(a, 1, 1, 1, \ldots) \in \SO_8$.  We may assume that the image of $g$ has prime order in $\GL(V)$.

If $g_1$ has odd order or order 4, then as in \eqref{triality}, on the other two 8 dimensional representations of $\Spin_8$ it has no fixed space and each eigenspace of dimension at most 4.  So on the sum of any two of these representations the largest eigenspace is at most 10-dimensional (out of 16), and the claim follows.

If $g_1$ has order 2, then it maps to an element of order 4 in the other two 8-dimensional representations via \eqref{triality} and again the same argument applies.
\end{proof}

The proposition will feed into the following elementary lemma, which resembles \cite[Lemma 4]{AndreevPopov} and \cite[\S3.3]{Guerreiro}.
\begin{lem} \label{mother.lem}
Let $V$ be a representation of a semisimple algebraic group $G$ over an algebraically closed field $k$.
\begin{enumerate}
\item \label{mother.grp} If for every unipotent $g \in G$ and every noncentral semisimple $g \in G$ of prime order we have
\begin{equation} \label{mother.1}
\dim V^g + \dim g^G < \dim V,
\end{equation}
then for generic $v \in V$, $G_v(k)$ is central in $G(k)$.

\item \label{mother.lie} Suppose $\car k = p > 0$ and let $\lsub$ be a $G$-invariant subspace of $\g$.  If for every nonzero $x \in \g \setminus \lsub$ such that $x^{[p]} \in \{ 0, x \}$ we have
\begin{equation} \label{mother}
\dim V^x + \dim (\Ad(G)x) < \dim V,
\end{equation}
then for generic $v \in V$, $\Lie(G_v) \subseteq \lsub$.
\end{enumerate}
\end{lem}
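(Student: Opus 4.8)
The plan is to bound, in each part, the locus of ``bad'' $v \in V$ by a finite union of constructible subsets of dimension $< \dim V$, each realized as the image of an incidence variety, and then to take the complement. The two hypotheses are calibrated precisely so that, after a Jordan-decomposition reduction, only finitely many conjugacy classes (resp.\ $\Ad(G)$-orbits) need to be tested.

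For \eqref{mother.grp}: if $v$ is bad, choose a noncentral $g \in G_v(k)$ and write $g = g_s g_u$ for its Jordan decomposition; then $g_s, g_u \in G_v(k)$ and at least one of them is noncentral. Thus the bad locus is contained in $B_u \cup B_s$, where $B_u$ (resp.\ $B_s$) is the set of $v$ whose stabilizer contains a nontrivial unipotent element (resp.\ the Zariski closure of $\langle g_s \rangle$, a diagonalizable subgroup, which one checks contains a noncentral element of prime order). Since $G$ has only finitely many unipotent classes $C_1, \dots, C_r$, and for each nontrivial $C_j$ the incidence variety $\{(g,v) : g \in C_j,\ v \in V^g\}$ is irreducible of dimension $\dim C_j + \dim V^{g_j}$, which is $< \dim V$ by \eqref{mother.1}, the image $\overline{B_u}$ lies in a finite union of proper closed subsets. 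For $B_s$, fix a maximal torus $T$: every semisimple element is $G$-conjugate into $T$, and $t \mapsto V^t$ is constant on each of the finitely many locally closed pieces $T_A = \{ t \in T : \chi(t) = 1 \iff \chi \in A \}$ cut out by the weights $\chi$ of $T$ on $V$. After checking that each $T_A$ meeting $G \setminus Z(G)$ contains an element $t_A$ of prime order, we get $B_s \subseteq \bigcup_A \overline{G \cdot V^{t_A}}$, and $\dim(G \cdot V^{t_A}) \le \dim G + \dim V^{t_A} - \dim Z_G(t_A) = \dim t_A^G + \dim V^{t_A} < \dim V$ by \eqref{mother.1}, since the fibre of $G \times V^{t_A} \to V$, $(g,w) \mapsto gw$, over a generic $g_0 w_0$ contains $g_0 Z_G(t_A)$ (as $Z_G(t_A)$ preserves $V^{t_A}$). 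Hence the bad locus misses a dense open $U$, which is the desired set.

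For \eqref{mother.lie}: here $\Lie(G_v) = \{ x \in \g : \mathrm{d}\rho(x) v = 0 \}$, so the bad locus is $\bigcup_{x \in \g \setminus \lsub} V^x$, and it suffices to show this is not dense. Since $\lsub$ is $G$-invariant it is an ideal, and (one checks) it is stable under the $[p]$-operation and under Jordan decomposition; so for $x \notin \lsub$ with $x = x_s + x_n$ we have $V^x \subseteq V^{x_s} \cap V^{x_n}$ and $x_s \notin \lsub$ or $x_n \notin \lsub$. If $x_s \notin \lsub$, a toral basis of the torus generated by $x_s$ in a maximal toral subalgebra has some element $y \notin \lsub$ with $y^{[p]} = y$ and $V^{x_s} \subseteq V^y$. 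If $x_n \notin \lsub$, replace it by $x_n^{[p]^{k-1}}$ with $k$ minimal such that $x_n^{[p]^k} = 0$ (so the $[p]$-power is zero and $V^{x_n} \subseteq V^{x_n^{[p]^{k-1}}}$); if this lands in $\lsub$, instead degenerate $x_n$ within $\overline{\Ad(G) x_n}$, using upper semicontinuity of $\dim V^{(-)}$, to a root element outside $\lsub$. Either way, $v \in V^x$ forces $v \in V^{x'}$ for some $x' \in \g \setminus \lsub$ with $x'^{[p]} \in \{0, x'\}$. There are finitely many $\Ad(G)$-orbits of such $x'$ (finitely many $[p]$-nilpotent orbits, and the toral ones lie in $\t$, where $[p]$ is $p$-semilinear, so finitely many up to the Weyl group and scaling), and for each, $\bigcup_{x'' \in \Ad(G) x'} V^{x''} = G \cdot V^{x'}$ has dimension $\le \dim \Ad(G) x' + \dim V^{x'} < \dim V$ by \eqref{mother}. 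So $\bigcup_{x \in \g \setminus \lsub} V^x$ lies in a finite union of proper closed subsets.

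I expect the main obstacle to be the reduction step in each part: converting an arbitrary bad element into one of the finitely many test elements appearing in \eqref{mother.1} or \eqref{mother} while preserving the relevant fixed-space containment, and, in the Lie-algebra case, while staying outside the invariant subspace $\lsub$. The geometric inputs (finiteness of unipotent classes and of nilpotent orbits, semicontinuity of $x \mapsto \dim V^x$, the stratification of $T$ by vanishing of weights, and toral/$p$-semilinear structure on $\t$) are standard, but the bookkeeping — in particular verifying that every relevant torus stratum $T_A$ carries an element of prime order and that a nonzero nilpotent degeneration can be kept out of $\lsub$ — is where the work lies; once the test set is pinned down, the dimension count is an immediate application of the hypothesis.
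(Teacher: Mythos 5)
Your overall covering strategy --- bound the bad locus by finitely many sets of the form $\bigcup_{g} gV^x$ and use the fiber-dimension estimate $\dim\bigl(\bigcup_g gV^x\bigr) \le \dim(\Ad(G)x) + \dim V^x$ --- is exactly the paper's. For part \eqref{mother.lie}, however, your reduction of a general $y \in \g_v \setminus \lsub$ to the finite test set diverges from the paper and has a gap. The paper invokes the Strade--Farnsteiner decomposition to write $y$ as a sum of commuting toral elements and a $[p]$-nilpotent element, all of which again lie in the restricted subalgebra $\g_v$, and then applies the conclusion ``$\g_v$ misses $X$'' to each piece separately. Your toral-basis argument for the semisimple part is fine, but your fallback for the nilpotent part --- degenerating $x_n$ inside $\overline{\Ad(G)x_n}$ to a root element $x'$ --- does not work: degeneration preserves neither the containment $v \in V^{x_n}$ (so the covering logic breaks) nor the inequality, since $\dim \Ad(G)x' \le \dim \Ad(G)x_n$ under degeneration, so hypothesis \eqref{mother} at $x'$ gives no bound on $\dim V^{x_n} + \dim(\Ad(G)x_n)$. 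You need the decomposition to hand you a piece of $\g_v$ that is literally in the test set $X$, which is what the cited decomposition theorem provides.

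For part \eqref{mother.grp} (which the paper only cites from elsewhere), the step you flag as ``to be checked'' is in fact false: a noncentral diagonalizable subgroup need not contain a noncentral element of prime order, and a stratum $T_A$ meeting $G \setminus Z(G)$ need not contain any element of prime order. For example, in $\SL_2$ the closure of $\langle \mathrm{diag}(i,-i)\rangle$ is $\mu_4$, whose unique prime-order element is the central $-I$; for $V = \mathrm{Sym}^4(k^2)$ the stratum where exactly the weights $0, \pm 4$ vanish is $\{\pm\mathrm{diag}(i,-i)\}$, all of order $4$. This is not repairable bookkeeping: $\SL_2$ acting on $\mathrm{Sym}^4(k^2)$ satisfies \eqref{mother.1} for every nontrivial unipotent and every noncentral semisimple element of prime order (each has $\dim V^g + \dim g^G = 3 < 5$), yet the generic stabilizer is the quaternion group $Q_8$, which is not central. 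So any correct argument must reduce to elements whose image in $\GL(V)$ has prime order (compare the reduction in the proof of Proposition \ref{espaces}), i.e., the class of semisimple elements for which the inequality must be verified is genuinely larger than the one you (and the statement as literally written) test.
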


We will apply this to conclude that $G_v$ is the trivial group scheme for generic $v$, so the hypothesis on $\car k$ in \eqref{mother.lie} is harmless.  When $\car k = 0$, the conclusion of \eqref{mother.grp} suffices.  

\begin{proof}
For \eqref{mother.grp}, see \cite[\S10]{GG:simple} or adjust slightly the following proof of \eqref{mother.lie}.  For $x \in \lie$, define
\[
V(x) := \{ v \in V \mid \text{there is $g \in G(k)$ s.t.~$xgv = 0$} \} = \bigcup_{g \in G(k)} gV^x.
\]
Define $\alpha \!: G \times V^x \to V$ by $\alpha(g,w) = gw$, so the image of $\alpha$ is precisely $V(x)$.  The fiber over $gw$ contains $(gc^{-1}, cw)$ for $\Ad(c)$ fixing $x$, and so $\dim V(x) \le \dim (\Ad(G)x) + \dim V^x$.

Let $X \subset \lie$ be the set of nonzero  $x \in \lie \setminus \lsub$ such that $x^{[p]} \in \{ 0, x \}$; it is a union of finitely many $G$-orbits.  (Every toral element --- i.e., $x$ with $x^{[p]} = x$ --- belongs to $\Lie(T)$ for a maximal torus $T$ in $G$ by \cite{BoSp1}, and it is obvious that there are only finitely many conjugacy classes of toral elements in $\Lie(T)$.)  Now $V(x)$ depends only on the $G$-orbit of $X$ (because $V^{\Ad(g)x} = gV^x$), so the union $\cup_{x \in X} V(x)$ is a finite union.  As $\dim V(x) < \dim V$ by the previous paragraph, the union $\cup V(x)$ is contained in a proper closed subvariety $Z$ of $V$, and for every $v$ in the (nonempty, open) complement of $Z$, $\lie_v$ does not meet $X$.

For each $v \in (V \setminus Z)(k)$ and each $y \in \lie_v$, we can write $y$ as
\[
y = y_n + \sum_{i = 1}^r \alpha_i y_i, \quad \text{$[y_n, y_i] = [y_i, y_j] = 0$ for all $i, j$}
\]
such that $y_1, \ldots, y_r \in \lie_v$ are toral, and $y_n \in \lie_v$ satisfies $y_n^{[p]} = 0$, see \cite[Th.~3.6(2)]{StradeF}.  Thus  $y_n$ and the $y_1, \ldots, y_r$ are in $\lsub$ by the previous paragraph.
\end{proof}

Note that, in proving Theorem \ref{MT}, we may assume that $k$ is algebraically closed (and so this hypothesis in Lemma \ref{mother.lem} is harmless).  Indeed, suppose $G$ is an algebraic group acting on a vector space $V$ over a field $k$.  Fix a basis $v_1, \ldots, v_n$ of $V$ and consider the element $\eta := \sum t_i v_i \in V \ot k(t_1, \ldots, t_n) = V \ot k(V)$ for indeterminates $t_1, \ldots, t_n$; it is a sort of generic point of $V$.  Certainly, $G$ acts generically freely on $V$ over $k$ if and only if the stabilizer $(G \times k(V))_v$ is the trivial group scheme, and this statement is unchanged by replacing $k$ with an algebraic closure.  That is, $G$ acts generically freely on $V$ over $k$ if and only if $G \times K$ acts generically freely on $V \ot K$ for $K$ an algebraic closure of $k$.

\section{Proof of Theorem \ref{MT} for $n > 20$} \label{large.sec}

Suppose $n > 2$, and 
put $V$ for a (half) spin representation of $\Spin_n$.  Recall that 
\[
\dim \Spin_n = r(2r-1) \quad \text{and} \quad \dim V = 2^{r-1} \quad \text{if $n = 2r$}
\]
whereas
\[
\dim \Spin_n = 2r^2 +r \quad \text{and} \quad \dim V = 2^r \quad \text{if $n = 2r+1$}
\]
and in both cases $\rank \Spin_n = r$.  Proposition \ref{espaces} gives an upper bound on $\dim V^g$ for noncentral $g$, and certainly the conjugacy class of $g$ has dimension at most $\dim \Spin_n - r$.  If we assume $n \ge 21$ and apply these, we obtain \eqref{mother.1}
and consequently the stabilizer $S$ of a generic $v \in V$ has $S(k)$ central in $\Spin_n(k)$.  Repeating this with the Lie algebra $\spin_n$ (and $\lsub$ the center of $\spin_n$) we find that $\Lie(S)$ is central in $\spin_n$.  For $n$ not divisible by 4, the representation $\Spin_n \to \GL(V)$ restricts to a closed embedding on the center of $\Spin_n$, so $S$ is the trivial group scheme as claimed in Theorem \ref{MT}.  

For $n$ divisible by 4, we conclude that $\HSpin_n$ acts generically freely on $V$ (using that Proposition \ref{espaces}\ref{espaces.lie} holds also for $\hspin_n$).  
 As the kernel $\mu_2$ of $\Spin_n \to \HSpin_n$ acts faithfully on the vector representation $W$, it follows that $\Spin_n$ acts generically freely on $V \oplus W$, completing the proof of Theorem \ref{MT} for $n > 20$.

\section{Proof of Theorem \ref{MT} for  $n \le 20$ and characteristic $\ne 2$}

In this section we assume that $\car k \ne 2$, and in particular the Lie algebra $\spin_n$ (and $\hspin_n$ in case $n$ is divisible by 4) is naturally identified with $\so_n$. 

\subsection*{Case $n = 18$ or $20$}
Take $V$ to be a half-spin representation of $G = \Spin_n$ (if $n = 18$) or $G = \HSpin_n$ (if $n = 20$).  To prove Theorem \ref{MT} for these $n$, it suffices to prove that $G$ acts generically freely on $V$, which we do by verifying the inequalities \eqref{mother.1} and \eqref{mother}.

\subsubsection*{Nilpotents and unipotents} Let $x \in \g$ with $x^{[p]}=  0$.  The argument for unipotent elements of $G$ is essentially identical (as we assume $\car k \ne 2$) and we omit it.

If, for a particular $x$, we find that the centralizer of $x$ has dimension $> 89$ (if $n = 18$) or $> 62$ (if $n = 20$), then $\dim(\Ad(G)x) < \frac14 \dim V$ and we are done by Proposition \ref{espaces}.

For $x$ nilpotent, 
the most interesting case is where $x$ is has partition $(2^{2t}, 1^{n-2t})$ for some $t$.  If $n = 20$, then such a class has centralizer of dimension at least 100, and we are done.  If $n = 18$, we may assume by similar reasoning that $t = 3$ or 4.  The centralizer of $x$ has dimension $\ge 81$, so $\dim(\Ad(G)x) \le 72$.  We claim that $\dim V^x \le 140$; it suffices to prove this for an element with $t = 3$, as the element with $t = 4$ specializes to it.  View it as an element in the image of $\so_9 \times \so_9 \to \so_{18}$ where the first factor has partition $(2^4, 1)$ and the second has partition $(2^2, 1^5)$.  Now, triality on $\so_8$ sends elements with partition $2^4$ to elements with partition $2^4$ and $(3, 1^5)$ --- see for example \cite[p.~97]{CMcG} --- consequently the $(2^4, 1)$ in $\so_9$ acts on the spin representation of $\so_9$ as a $(3, 2^4, 1^5)$.  Similarly, the $(2^2, 1^5)$ acts on the spin representation of $\so_9$ as $(2^4, 1^8)$.  The action of $x$ on the half-spin representation of $\so_{18}$ is the tensor product of these, and we find that $\dim V^x \le 140$ as claimed.

Suppose $x$ is nilpotent and has a Jordan block of size at least 5.  An element with partition $(5,1)$ in $\so_6$ is a regular nilpotent in $\sl_4$ with 1-dimensional kernel.  Using the tensor product decomposition as in the proof of Proposition \ref{espaces}, we deduce that an element $y \in \so_n$ with partition $(5, 1^{n-5})$ has $\dim V^y \le \frac14 \dim V$, and consequently by specialization $\dim V^x \le \frac14 \dim V$.  As $\dim(\Ad(G)x) \le \dim G - \rank G < \frac34 \dim V$, the inequality is verified for this $x$.

Now suppose $x$ is nilpotent and all Jordan blocks have size at most 4, so it is a specialization of $(4^4, 1^2)$ if $n = 18$ or $4^5$ if $n = 20$.  These classes have centralizers of dimension 41 and 50 respectively, hence $\dim (\Ad(G) x) < \frac12 \dim V$.  If $x$ has at least two Jordan blocks of size at least 3, then $x$ specializes to $(3^2, 1^{n-6})$; as triality sends elements with partition $(3^2, 1^2)$ to elements with the same partition, we find $\dim V^x \le \frac12 \dim V$.  We are left with the case where $x$ has partition $(3, 2^{2t}, 1^{n-2t-3})$ for some $t$.  If $t = 0$, then the centralizer of $x$ has dimension 121 or 154 and we are done.  If $t > 0$, then $x$ specializes to $y$ with partition $(3, 2^2, 1^{n-7})$.  As triality on $\so_8$ leaves the partition $(3, 2^2, 1)$ unchanged, we find $\dim V^x \le \dim V^y \le \frac12 \dim V$, as desired, completing the verification of \eqref{mother} for $x$ nilpotent.

\subsubsection*{Semisimple elements in $\Lie(G)$}
For $x \in \so_n$ semisimple, the most interesting case is when $x$ is diagonal with entries $(a^t, (-a)^t, 0^{n-2t})$ where exponents denote multiplicity and $a \in k^\times$.  The centralizer of $x$ is $\GL_t \times \SO_{n-2t}$, so 
$\dim(\Ad(\SO_n)x) = \textstyle\binom{n}{2} - t^2 - \textstyle\binom{n-2t}{2}$.
This is less than $\frac14 \dim V$ for $n = 20$, settling that case.  For $n = 18$, if $t = 1$ or 2, $x$ is in the image of an element $(a,-a,0,0)$ or $(a/2, a/2, -a/2, -a/2)$ in $\sl_4 \cong \so_6$, and the tensor product decomposition gives that $\dim V^x \le \frac12 \dim V$ and again we are done.  If $t > 2$, we consider a nilpotent $y = \left( \begin{smallmatrix} 0 & Y \\ 0 & 0 \end{smallmatrix} \right)$ not commuting with $x$ where $Y$ is 9-by-9 and $y$ specializes to a nilpotent $y'$ with partition $(2^4, 1^8)$.  Such a $y'$ acts on $V$ as 16 copies of $(3, 2^4, 1^5)$, hence $\dim V^{y'} = 160$.  By specializing $x$ to $y$ as in the proof of Proposition \ref{espaces}, we find $\dim V^x \le 160$ and again we are done.

\subsubsection*{Semisimple elements in $G$}
Let $g \in G(k)$ be semisimple, non-central, and of prime order.  If $n =  20$, then $\dim g^G \le 180 < \frac38 \dim V$ and we are done by Proposition \ref{espaces}.  
So assume $n = 18$.   If we find that the centralizer of $g$ has dimension $> 57$, then $\dim g^G < \frac38 \dim V$ and we are done by Proposition \ref{espaces}.

If $g$ has order 2, then it maps to an element of order 2 in $\SO_{18}$ whose centralizer is no smaller than $\SO_8 \times \SO_{10}$ of dimension 73, and we are done.  So assume $g$ has odd prime order.  We divide into cases depending on the image $\gbar \in \SO_{18}$ of $g$.

If $\gbar$ has at least 5 distinct eigenvalues, then either it has at least 6 distinct eigenvalues $a, a^{-1}, b, b^{-1}, c, c^{-1}$, or it has 4 distinct eigenvalues that are not equal to 1, and the remaining eigenvalue is 1.  In the latter case set $c = 1$.  View $g$ as the image of $(g_1, g_2) \in \Spin_6 \times \Spin_{12}$ where $g_1$ maps to a diagonal $(a, b, c, c^{-1}, b^{-1}, a^{-1})$ in $\SO_6$, a regular semisimple element.  Therefore, the eigenspaces of the image of $g_1$ under the isomorphism $\Spin_6 \cong \SL_4$ are all 1-dimensional and the tensor decomposition argument shows that $\dim V^g \le \frac14 \dim V$.  As $\dim g^G \le 144 < \frac34 \dim V$, we are done in this case.

If $\gbar$ has exactly 4 eigenvalues, then the centralizer of $\gbar$ is at least as big as $\GL_4 \times \GL_5$ of dimension 41, so $\dim g^G \le 112 < \frac12 \dim V$.  Viewing $g$ as the image of $(g_1, g_2) \in \Spin_8 \times \Spin_{10}$ such that the image $\gbar_1$ of $g_1$ in $\SO_8$ exhibits all 4 eigenvalues, then $\gbar_1$ has eigenspaces all of dimension 2 or of dimensions 3, 3, 1, 1.  Considering the possible images of $\gbar_1$ as in \eqref{triality}, each eigenspaces in each of the 8-dimensional representations is at most 4, so $\dim V^g \le \frac12 \dim V$ and this case is settled.

In the remaining case, $\gbar$ has exactly 2 nontrivial (i.e., not 1) eigenvalues $a, a^{-1}$.  If 1 is not an eigenvalue of $\gbar$, then the centralizer of $\gbar$ is $\GL_9$ of dimension 81, and we are done.  If the eigenspaces for the nontrivial eigenvalues are at least 4-dimensional, then we can take $g$ to be the image of $(g_1, g_2) \in \Spin_{10} \times \Spin_8$ where $g_1$ maps to $(a,a,a,a,1, \ldots) \in \SO_{10}$.  The images of $(a,a,a,a,\ldots) \in \SO_8$  as in \eqref{triality} are $(a,a,a,a^{-1},\ldots)$ and $(a^2, 1, 1, 1, \ldots)$, so the largest eigenspace of $g_1$ on a half-spin representation is 6, so $\dim V^g \le \frac38 \dim V$.  As the conjugacy class of a regular element has dimension $144 < \frac58 \dim V$, this case is complete.
Finally, if $\gbar$ has eigenspaces of dimension at most 2 for $a$, $a^{-1}$, then $\dim g^G \le 58 < \frac38 \dim V$ and the $n =18$ case is complete.

\subsection*{Case $n = 17$ or $19$}  For $n = 17$ or $19$, the spin representation of $\Spin_n$ can be viewed as the restriction of a half-spin represenation of the overgroup $\HSpin_{n+1}$.  We have already proved that this representation of $\HSpin_{n+1}$ is generically free.

\subsection*{Case $n = 15$ or $16$} We use the general fact:
\begin{lem} \label{finite}
Let $G$ be a quasi-simple algebraic group and $H$ a proper closed subgroup of $G$ and $X$ finite.
Then for generic $g \in G$, $H \cap gXg^{-1} = H \cap X \cap Z(G)$. 
\end{lem}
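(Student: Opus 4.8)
The plan is to establish the inclusion $H \cap gXg^{-1} \subseteq H \cap X \cap Z(G)$ for $g$ ranging over a dense open subset of $G$; the reverse inclusion needs nothing, since a central element $x \in X \cap Z(G)$ satisfies $gxg^{-1} = x$, so it lies in $gXg^{-1}$, and by hypothesis it lies in $H$.

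The first step is to reduce to a single noncentral element. For $x \in X$ let $c_x \colon G \to G$ be the morphism $g \mapsto gxg^{-1}$ and put $S_x := c_x^{-1}(H)$, a closed subset of $G$ because $H$ is closed. If $x \in Z(G)$ then $S_x = G$, but the key claim is that $S_x$ is a \emph{proper} subset of $G$ for every noncentral $x$. Granting this, since $X$ is finite the union $Z := \bigcup_{x \in X,\ x \notin Z(G)} S_x$ is a proper closed subset, and for $g$ in the dense open complement $G \setminus Z$ no conjugate $gxg^{-1}$ with $x$ noncentral belongs to $H$. Hence any element of $H \cap gXg^{-1}$ has the form $gxg^{-1}$ with $x \in X \cap Z(G)$, so it equals $x$ and lies in $H \cap X \cap Z(G)$, as desired.

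It remains to prove that $x^G \not\subseteq H$ when $x$ is noncentral. Suppose otherwise and let $M$ be the intersection of all closed subgroups of $G$ containing $x^G$; this is a closed subgroup, it contains $x^G$, and $M \subseteq H$. Conjugation by any $g \in G$ carries $M$ to a closed subgroup containing $g x^G g^{-1} = x^G$, hence containing $M$; running over all $g$ shows that $M$ is normal in $G$. Now $M$ contains the noncentral element $x$, while $G$ is quasi-simple, so every proper closed normal subgroup of $G$ lies in $Z(G)$: its identity component is a connected closed normal subgroup, hence trivial or all of $G$ (the root system of $G$ being irreducible), and a finite normal subgroup of the connected group $G$ is centralized by $G$. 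Therefore $M$ cannot be proper, i.e. $M = G$, forcing $H = G$ and contradicting the hypothesis that $H$ is a proper subgroup.

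I expect the only delicate point to be this last step — it is exactly where the hypotheses ``$G$ quasi-simple'' and ``$H$ a proper closed subgroup'' are used — but it rests on standard structure theory: a noncentral conjugacy class in a quasi-simple algebraic group generates the whole group. Everything else is a formal manipulation with the finiteness of $X$ and the closedness of $H$.
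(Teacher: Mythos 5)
Your proof is correct and takes essentially the same route as the paper: for each noncentral $x \in X$ the set $\{g \in G \mid gxg^{-1} \in H\}$ is a proper closed subvariety, and a generic $g$ avoids the finite union of these, leaving only the central elements of $X$. The paper merely asserts the properness of these subvarieties, whereas you justify it (the closed subgroup generated by a noncentral conjugacy class is normal, hence all of $G$ by quasi-simplicity); that extra step is sound and is the only point where the hypotheses on $G$ and $H$ enter.
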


\begin{proof}
For each $x \in X \setminus Z(G)$, note that $W(x):= \{g \in G \mid x^g \in H\}$
is a proper closed subvariety of $G$ and, since $X$ is finite, $\cup W(x)$ is also proper closed.
Thus for an open subset of $g$ in $G$,  $g(X \setminus Z(G))g^{-1}$ does not meet $H$.
\end{proof}

\begin{lem} \label{Spin16}
Let $G = \HSpin_{16}$ and $V$ a half-spin representation over an algebraically closed field $k$ of characteristic $\ne 2$.
The stabilizer of a generic vector in $V$ is isomorphic to $(\Z/2)^8$, as a group scheme.
\end{lem}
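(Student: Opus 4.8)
The plan is to realize the pair $(\HSpin_{16},V)$ inside the group $E_8$ and to exploit that the symmetric pair $(\mathfrak e_8,\so_{16})$ is split. First I would fix a maximal torus $T'$ of $E_8$, put $\t':=\Lie(T')$, and take the Chevalley involution $\chi$ of $\mathfrak e_8$ relative to $T'$ and a Chevalley basis: it acts as $-1$ on $\t'$ and sends each root vector $x_\alpha$ to a scalar multiple of $x_{-\alpha}$. Since $E_8$ has no outer automorphisms, $\chi=\Ad(s)$ for some $s\in E_8(k)$; as $Z(E_8)=1$ and $\chi^2=1$ we get $s^2=1$, so (using $\car k\ne 2$) $s$ is a semisimple involution. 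On each $\Ad(s)$-stable plane $\mathfrak e_8^{\alpha}\oplus\mathfrak e_8^{-\alpha}$ the involution $\Ad(s)$ swaps the two lines and hence has a $1$-dimensional fixed space, while it fixes no nonzero vector of $\t'$; so the fixed algebra $\mathfrak k:=\{y\in\mathfrak e_8:\Ad(s)y=y\}$ has dimension $240/2=120$. A reductive subalgebra of $\mathfrak e_8$ of rank $8$ and dimension $120$ must be $\so_{16}$, and $Z_{E_8}(s)$ is connected because $E_8$ is simply connected (Steinberg). Because $\mathfrak k$ is semisimple, the $(-1)$-eigenspace $V:=\{y\in\mathfrak e_8:\Ad(s)y=-y\}$ generates $\mathfrak e_8$, so $Z_{E_8}(s)$ acts faithfully on $V$; thus $G:=Z_{E_8}(s)\cong\HSpin_{16}$, and $V$, being a faithful $128$-dimensional (hence minuscule, hence irreducible) representation of $G$, is a half-spin representation. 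Note $\t'\subseteq V$, and $s$ normalizes $T'$ and acts on it by inversion.

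Next I would choose $v_0\in\t'$ that is regular semisimple in $\mathfrak e_8$; these form a dense open subset of $\t'$, because each root of $E_8$ is primitive in the root lattice and so no $\mathrm{d}\alpha$ vanishes on $\t'$. For such $v_0$ one has $Z_{E_8}(v_0)=Z_{E_8}(\t')=T'$, whence, as group schemes,
\[
G_{v_0}\;=\;Z_{E_8}(s)\cap Z_{E_8}(v_0)\;=\;\{t\in T':sts^{-1}=t\}\;=\;\{t\in T':t^2=1\}\;=\;T'[2]\;\cong\;(\Z/2)^8,
\]
using $\car k\ne 2$ (consistently, $\Lie(G_{v_0})=\so_{16}\cap\t'=0$).

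Finally I would check that $v_0$ is generic. Consider the action morphism $a\colon G\times(\t')^{\mathrm{reg}}\to V$. If $a(g,c)=a(g',c')$ then $c$ and $c'$ are $G$-conjugate regular elements of $\t'$; since $Z_{E_8}(c)=Z_{E_8}(c')=T'$, any element of $G$ conjugating $c$ to $c'$ normalizes $T'$, hence lies in $N_{E_8}(T')\cap G$, which is finite (its identity component lies in $T'\cap G=T'[2]$). Therefore $c'$ ranges over a finite subset of $\t'$, and for each $c'$ the admissible $g'$ form a single coset of the finite group $Z_G(c')=T'[2]$; so $a$ has finite fibres. Consequently $\overline{a(G\times(\t')^{\mathrm{reg}})}$ has dimension $\dim G+\rank E_8=120+8=128=\dim V$, so the image of $a$ contains a dense open $\Omega\subseteq V$, and for every $v=g\cdot c\in\Omega$ we get $G_v=g\,Z_G(c)\,g^{-1}\cong(\Z/2)^8$, as desired.

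The step I expect to be most delicate is not any single inequality but rather justifying the $E_8$-theoretic inputs in small characteristic: that the centralizer in $E_8$ of a regular semisimple Lie-algebra element is the corresponding maximal torus, and that the fixed algebra of the Chevalley involution is exactly $\so_{16}$. Both are routine for $\car k>5$; for $\car k\in\{3,5\}$ one should argue by reduction from characteristic $0$ or verify the relevant orbit dimensions by hand. If one wishes to avoid $E_8$ altogether, an alternative is to verify the inequalities \eqref{mother} (and their group analogues) for $\HSpin_{16}$ acting on $V$ — they fail only for a short list of involutions and toral elements, which can be handled as in the $n=18,20$ case — and conclude via Lemma \ref{mother.lem} that the generic stabilizer is finite \'etale; but pinning it down as $(\Z/2)^8$, rather than merely bounding it, would still require the orbit bookkeeping above or the classical characteristic-zero computation.
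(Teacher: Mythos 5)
Your proof is correct and is essentially the paper's own argument: both realize $V$ as the $(-1)$-eigenspace of an involution of $E_8$ that inverts a maximal torus $T$, take a generic $\tau\in\t\subseteq V$ with $Z_{E_8}(\tau)=T$, and identify the generic stabilizer with $T[2]\cong(\Z/2)^8$. Your finite-fibre argument showing $G\cdot\t^{\mathrm{reg}}$ is dense in $V$ makes explicit a step the paper leaves implicit, and the scheme-theoretic equality $Z_{E_8}(\tau)=T$ that you flag as the delicate point is exactly what the paper disposes of by citing SGA3 and \cite[Prop.~2.3]{GG:edp}.
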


\begin{proof}
Consider $\Lie(E_8) = \Lie(G) \oplus V$ where these are the eigen\-spaces
of an involution in $E_8$.   That involution inverts a maximal torus $T$ of $E_8$ and so 
there is maximal Cartan subalgebra $\mathfrak{t} = \Lie(T)$ on which the involution acts as $-1$.  As $E_8$ is smooth and adjoint, for a generic element $\tau \in \mathfrak{t}$, the centralizer $C_{E_8}(\tau)$ has identity component $T$ by \cite[XIII.6.1(d), XIV.3.18]{SGA3.2}, and in fact equals $T$ by \cite[Prop.~2.3]{GG:edp}.  Since $\mathfrak{t}$ misses $\Lie(G)$,  the annihilator of $\tau$ in $\Lie(G)$ is $0$ as claimed. Furthermore, $G_v(k) = T(k) \cap G(k)$, i.e., the elements of $T(k)$ that commute with the involution, so $G_v(k) \cong \mu_2(k)^8$.
\end{proof}


\begin{cor}If $\car k \ne 2$, then
$\Spin_{15}$ acts generically freely on V.
\end{cor}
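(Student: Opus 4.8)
The plan is to deduce this from Lemma~\ref{Spin16} by embedding $\Spin_{15}$ into $\HSpin_{16}$ and applying Lemma~\ref{finite}. First I would set up the embedding: $\Spin_{15}$ (type $B_7$) sits inside $\Spin_{16}$, and a half-spin representation $V$ of $\Spin_{16}$ restricts to the spin representation of $\Spin_{15}$, which is irreducible of dimension $2^7$. Since this restriction is faithful, the central $\mu_2=\ker(\Spin_{16}\to\HSpin_{16})$ (the kernel of the half-spin representation) meets $\Spin_{15}$ trivially, so $\Spin_{15}$ is a proper closed subgroup of the quasi-simple group $G:=\HSpin_{16}$, and under this identification the spin representation $V$ of $\Spin_{15}$ is the restriction of the half-spin representation of $G$ treated in Lemma~\ref{Spin16}.

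Next I would transport the generic stabilizer. Recall from the proof of Lemma~\ref{Spin16} that $\Lie(E_8)=\Lie(G)\oplus V$ are the $\pm1$-eigenspaces of an involution $\theta$, that $\theta$ inverts a maximal torus $T$ of $E_8$ with $\mathfrak{t}:=\Lie(T)\subseteq V$, and that for generic $\tau\in\mathfrak{t}$ one has $C_{E_8}(\tau)=T$; hence $G_\tau=G\cap C_{E_8}(\tau)=G\cap T=:X$ is a finite \'etale group scheme with $X(k)\cong(\Z/2)^8$ and $\Lie X=0$. I would then argue that $G\cdot\mathfrak{t}$ is dense in $V$: the morphism $\phi\colon G\times\mathfrak{t}\to V$, $(g,\tau)\mapsto g\tau$, has finite fibres over the dense locus where $\tau$ is regular in $\Lie(E_8)$ (the fibre over $g\tau$ is in bijection with $G\cap gN_{E_8}(T)$, which is finite because $G\cap T$ and $N_{E_8}(T)/T$ are finite), while $\dim G+\dim\mathfrak{t}=120+8=128=\dim V$, so $\phi$ is dominant. (Equivalently, $\mathfrak{t}$ is a Cartan subspace of the $\Z/2$-graded Lie algebra $\Lie(E_8)$ and this is the density of regular semisimple elements.) Intersecting the image under $\phi$ of the dense open locus where $\tau$ is generic with the dense open locus $W\subseteq G$ supplied by Lemma~\ref{finite} (applied to $G$, $H:=\Spin_{15}$, $X$), I obtain a dense open $U\subseteq V$ such that each $v\in U(k)$ can be written $v=g\tau$ with $\tau$ generic and $g\in W(k)$; then $G_v=gXg^{-1}$, so $(\Spin_{15})_v=\Spin_{15}\cap gXg^{-1}=\Spin_{15}\cap X\cap Z(G)$ by Lemma~\ref{finite}.

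It remains to see that $Z(G)\cap X=1$. Since $G=\HSpin_{16}$ acts faithfully on the irreducible module $V$, the nontrivial element of $Z(G)(k)\cong\Z/2$ acts as the scalar $-1$, so (as $\car k\ne2$) it fixes no $\tau\ne0$ and hence lies outside $G_\tau(k)=X(k)$; as $Z(G)\cap X$ is finite \'etale, this gives $Z(G)\cap X=1$. Therefore $(\Spin_{15})_v$, being the intersection of the smooth group $\Spin_{15}$ with the finite \'etale group $gXg^{-1}$, is finite \'etale with $(\Spin_{15})_v(k)=1$, hence is the trivial group scheme (and $\Lie((\Spin_{15})_v)\subseteq\Lie(G_v)=0$ reconfirms this). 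As $U$ is dense open, $\Spin_{15}$ acts generically freely on $V$. The one step I expect to require real care is the density of $G\cdot\mathfrak{t}$ in $V$ --- i.e., exhibiting a dense family of vectors whose $\HSpin_{16}$-stabilizers are honest $G(k)$-conjugates of $X$ --- after which the argument is a formal combination of Lemmas~\ref{Spin16} and~\ref{finite} with the faithfulness of the half-spin module.
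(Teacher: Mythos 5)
Your proof is correct and follows essentially the same route as the paper's: combine Lemma~\ref{Spin16} with Lemma~\ref{finite} to conclude that a generic conjugate of the finite stabilizer $X$ meets $\Spin_{15}$ only in $X\cap Z$, which is trivial, while the Lie-algebra stabilizer vanishes already for $\hspin_{16}$. The only differences are cosmetic: you work inside $\HSpin_{16}$ rather than $\Spin_{16}$, and you spell out the dominance of $G\times\mathfrak{t}\to V$ (needed to know the generic stabilizer really is a generic conjugate of $X$), a point the paper leaves implicit.
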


\begin{proof}
Of course the Lie algebra does because this is true for $\Lie(\Spin_{16})$.

For the group,  a generic stabilizer is $\Spin_{15} \cap X$ where $X$ is a generic stabilizer
in $\Spin_{16}$.  Now $X$ is finite and meets the center of $\Spin_{16}$ in the kernel of $\Spin_{16} \to \HSpin_{16}$, whereas $\Spin_{15}$ injects in to $\HSpin_{16}$.  Therefore, by Lemma \ref{finite} a generic conjugate of $X$ intersect $\Spin_{15}$ is trivial.
\end{proof}

\begin{cor} If $\car k \ne 2$, then $\Spin_{16}$ acts generically freely on $V \oplus W$, where $V$ is a half-spin
and $W$ is the natural ($16$-dimensional) module.
\end{cor}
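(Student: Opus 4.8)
The plan is to deduce this from the corollary just proved, namely that $\Spin_{15}$ acts generically freely on $V$. The first step is to identify the stabilizer in $\Spin_{16}$ of a generic vector of $W$. Since $k$ is algebraically closed and $\car k \ne 2$, a generic $w \in W$ is anisotropic, so $w^\perp$ is a nondegenerate $15$-dimensional quadratic space; the stabilizer of $w$ in $\SO_{16} = \GL(W)$ is $\SO(w^\perp) \cong \SO_{15}$, and its preimage in $\Spin_{16}$ — which is exactly $(\Spin_{16})_w$ — is a copy of $\Spin_{15}$. After conjugating $w$ we may take this to be the standard $\Spin_{15} \subseteq \Spin_{16}$, whose action on $V$ by restriction of the half-spin representation is precisely the one appearing in the preceding corollary.

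The second step is then immediate: for $v$ a generic vector of $V$, chosen in the dense open subset on which $\Spin_{15}$ acts freely, the stabilizer of the pair is
\[
(\Spin_{16})_{(v,w)} = (\Spin_{16})_v \cap (\Spin_{16})_w = (\Spin_{15})_v = 1
\]
as a group scheme; in particular the infinitesimal part needs no separate treatment, since it was already handled in that corollary. It remains only to pass from triviality of the stabilizer of such pairs to generic freeness on all of $V \oplus W$. Since for every anisotropic $w$ the subgroup $(\Spin_{16})_w$ is a $\Spin_{16}$-conjugate of $\Spin_{15}$, and a conjugate of a generically free action is generically free, the set of bad pairs $(v,w)$ (those with nontrivial stabilizer) meets each fibre over an anisotropic $w$ in a proper closed subset of $V$; a dimension count then shows this bad locus lies in a proper closed subvariety of $V \oplus W$, which is what we want.

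I do not expect a real obstacle here: this corollary is essentially a repackaging of the preceding one, together with the standard computation of the stabilizer of a vector under the action on $W$. The only point requiring a little care is matching up the genericity quantifiers — the stabilizer of a generic $w$ is only a conjugate of $\Spin_{15}$, so the dense open of good $v$'s depends on $w$ — and this can equally well be dispatched by Lemma \ref{finite}, in exact parallel with the proof of the preceding corollary, taking $H = (\Spin_{16})_w \cong \Spin_{15}$ (which injects into $\HSpin_{16}$) and $X = (\Spin_{16})_v$ the finite stabilizer of a generic $v$, which meets $Z(\Spin_{16})$ in the kernel of $\Spin_{16} \to \HSpin_{16}$.
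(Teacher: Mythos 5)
Your argument is correct and follows essentially the same route as the paper: identify $(\Spin_{16})_w$ for generic $w\in W$ with a conjugate of $\Spin_{15}$, so that the stabilizer of a generic pair is $X^g\cap \Spin_{15}$ with $X$ the finite generic stabilizer on $V$, and then conclude via the preceding corollary and Lemma \ref{finite} (using that $X$ meets $Z(\Spin_{16})$ only in $\ker(\Spin_{16}\to\HSpin_{16})$, which misses $\Spin_{15}$). The scheme-theoretic/Lie-algebra part is likewise disposed of exactly as in the paper, by noting it is already trivial on $V$ alone.
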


\begin{proof}
Now the generic stabilizer is already $0$ for the Lie algebra on $V$ whence on $V \oplus W$.

In the group $\Spin_{16}$, a generic stabilizer is conjugate to $X^g \cap \Spin_{15}$ where $X$ is the finite
stabilizer on $V$ and as in the proof of the previous corollary, this is generically trivial.
\end{proof}

\section{Proof of Theorem \ref{MT} for $n \le 20$ and characteristic 2} \label{spin20.2}

%

To complete the proof of Theorem \ref{MT}, it remains to prove, in case $\car k = 2$, that the following representations $G \to \GL(V)$ are generically free:
\begin{enumerate}
\item $G = \Spin_{15}$, $\Spin_{17}$, $\Spin_{19}$ and $V$ is a spin representation.
\item $G = \Spin_{18}$ and $V$ is a half-spin representation.
\item $G = \Spin_{16}$ or $\Spin_{20}$ and $V$ is a direct sum of the vector representation and a half-spin representation.
\item $G = \HSpin_{20}$ and $V$ is a half-spin representation.
\end{enumerate}

Since we are in bad characteristic, the class of unipotent and nilpotent elements are more complicated.  On the other hand,
since we are in a fixed small characteristic and the dimensions of the modules and Lie algebras are relatively small, one can 
actually do some computations.

In particular, we check that in each case that there exists a $v \in V$ over the field of $2$ elements such that $\Lie(G_v)=0$.
(This can be done easily in various  computer algebra systems.)    It follows that the same is true over any field
of characteristic $2$.   Since the set of $w \in V$ where $\Lie(G_w)=0$ is an open subvariety of $V$, this shows
that $\Lie(G_w)$ is generically $0$. 

It remains to show that the group of $k$-points $G_v(k)$ of the stabilizer of a generic $v \in V$ is the trivial group.  

First consider $G=\Spin_{16}$.  By Lemma \ref{finite}, it suffices to show that, for generic $w$ in a half-spin representation $W$, $G_w(k)$ is finite, which is true by the appendix.  Alternatively, the finiteness of $G_w(k)$ was proved in \cite{GurLawther} by working in $\Lie(E_8) =  \hspin_{16} \oplus W$ and exhibiting a regular nilpotent of $\Lie(E_8)$
in $W$ whose stabilizer in $\hspin_{16}$ is trivial.   Since the set of $w$ where $(\Spin_{16})_w(k)$ is finite is open, the result follows.

%
%
%


Similarly, $\Spin_{15}$ acts generically freely on the spin representation.   

As in the previous section, it suffices to show that for $G$ one of $\HSpin_{20}$ and $\Spin_{18}$ and $V$ a half spin representation, 
$G_v(k)=1$ for generic $v \in V$.  

We first consider involutions.   We recall that an involution $g \in \SO_{2n}=\SO(W)$ (in characteristic $2$) is essentially determined
by the number $r$ of nontrivial Jordan blocks of $g$ (equivalently $r = \dim (g-1)W$) and whether the subspace $(g-1)W$ is totally singular or not with $r$ even
(and $r \le n$) --- see \cite{AschbacherSeitz}, \cite{LiebeckSeitz} or
see \cite[Sections 5,6]{FGS} for a quick elementary treatment.   If $r < n$ or $(g-1)V$ is not totally singular, there is  $1$ class
for each possible pair of invariants.  If $r=n$ (and so $n$ is even) and $(g-1)V$ is totally singular, then there are two such classes
interchanged by a graph automorphism of order $2$.   

\begin{lem} Suppose $\car k = 2$.  Let $G=\Spin_{2n}, n >  4$ and let $W$ be a half-spin representation.  If $g \in G$ is an involution  
other than a long root element, then  $\dim W^g  \le (5/8) \dim W$.
\end{lem}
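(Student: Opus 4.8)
The plan is to descend to the image $\bar g$ of $g$ in $\SO_{2n}$ and then split off a $4$-dimensional orthogonal block. Since $\car k=2$, the kernel $\mu_2$ of $\Spin_{2n}\to\SO_{2n}$ and the centre of $\Spin_{2n}$ have trivial groups of $k$-points, so $\bar g$ is again an involution, $\bar g\neq 1$, and $g$ is recovered from $\bar g$. By the structure theory of involutions recalled above, the orthogonal space $W_{\mathrm{nat}}=k^{2n}$ decomposes $\bar g$-invariantly and orthogonally as $U_1\perp\cdots\perp U_s\perp W_0$, where $\bar g$ acts trivially on $W_0$ and each $U_i$ is a non-degenerate $4$-space on which $\bar g$ acts with exactly two non-trivial ($2\times2$) Jordan blocks; thus $r:=\dim(\bar g-1)W_{\mathrm{nat}}=2s\le n$, and $(\bar g-1)W_{\mathrm{nat}}$ is totally singular precisely when each $(\bar g|_{U_i}-1)U_i$ is. Under $\SO_4\cong(\SL_2\times\SL_2)/\mu_2$, an involution of $\SO_4$ with two non-trivial Jordan blocks is the image of $(u,1)$ or $(1,u)$ when $(\bar g|_{U_i}-1)U_i$ is totally singular, and of $(u,u)$ when it is not, where $u=\left(\begin{smallmatrix}1&1\\0&1\end{smallmatrix}\right)$. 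Finally, $g$ is a long root element exactly when $r=2$ and $(\bar g-1)W_{\mathrm{nat}}$ is totally singular; so, since $g$ is \emph{not} a long root element, either some $U_i$ is of ``$(u,u)$-type'', or all $U_i$ are of ``$(u,1)$-type'' and $s\ge 2$.

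The tool is the subgroup $\Spin_4\times\Spin_{2n-4}\hookrightarrow\Spin_{2n}$ covering $\SO_4\times\SO_{2n-4}\hookrightarrow\SO_{2n}$. As in the proof of Proposition~\ref{espaces}, the half-spin module restricts as
\[
W\big|_{\Spin_4\times\Spin_{2n-4}}=(S_1^+\otimes S_2^+)\oplus(S_1^-\otimes S_2^-),
\]
where $S_1^+,S_1^-$ are the two $2$-dimensional natural modules of $\Spin_4\cong\SL_2\times\SL_2$, the modules $S_2^\pm$ are the two half-spin modules of $\Spin_{2n-4}$ (in a suitable pairing), and each summand has dimension $\tfrac12\dim W$. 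The only input from characteristic $2$ I will use is that, for a unipotent $2\times2$ Jordan block $J$ and any unipotent $M$ with $M^2=\mathrm{id}$, one has $M\otimes J\cong J^{\oplus\dim M}$; this is immediate from $J_1\otimes J\cong J$ together with the characteristic-$2$ identity $J\otimes J\cong J\oplus J$, and it shows that the fixed subspace of $M\otimes J$ has dimension $\dim M$.

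I would then choose the decomposition so that $U_1$ is the distinguished block and write $g$ as the image of $(g_1,g_2)\in\Spin_4\times\Spin_{2n-4}$ (the lift exists because $k$ is algebraically closed of characteristic $2$, exactly as for Proposition~\ref{espaces}; note $g_1^2=g_2^2=1$ since the kernel of the covering has trivial group of $k$-points). As the conclusion is conjugation invariant, I may take $g_1=(u,u)$ or $g_1=(u,1)$ accordingly. If $g_1=(u,u)$, then $g_1$ acts as $J$ on each of $S_1^+$ and $S_1^-$, so $(S_1^\pm\otimes S_2^\pm)^g$ has dimension $\dim S_2^\pm$ and $\dim W^g=2\dim S_2^+=\tfrac12\dim W<\tfrac58\dim W$. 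If $g_1=(u,1)$, then $g_1$ acts as $J$ on $S_1^+$ and trivially on $S_1^-$, so $\dim(S_1^+\otimes S_2^+)^g=\dim S_2^+=\tfrac14\dim W$, while $g$ acts on $S_1^-\otimes S_2^-$ as $\mathrm{id}_{S_1^-}\otimes(g_2|_{S_2^-})$, giving
\[
\dim(S_1^-\otimes S_2^-)^g=2\dim(S_2^-)^{g_2}\le 2\cdot\tfrac34\dim S_2^-=\tfrac38\dim W
\]
by Proposition~\ref{espaces} applied to $g_2$, which is non-central because $s\ge 2$ forces $g_2\neq 1$ and $Z(\Spin_{2n-4})(k)=1$, and where $2n-4\ge 6$ because $n>4$. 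Summing the two summands gives $\dim W^g\le\tfrac58\dim W$. (The bound is sharp: a product of two commuting long root elements whose $\Spin_{2n-4}$-component is a long root element realises equality.)

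The step I expect to cost the most care is the first paragraph: checking that a characteristic-$2$ involution of $\SO_{2n}$ really is an orthogonal sum of $\SO_4$-blocks of the two stated types together with a trivial block — this is contained in the structure theory of \cite{AschbacherSeitz}, \cite{LiebeckSeitz}, \cite{FGS}, and it is exactly where the invariant ``$r\le n$'' comes from — together with the routine verification that $g$ itself, and not merely $\bar g$, factors through the relevant $\Spin_4\times\Spin_{2n-4}$, which is the lifting argument already used for Proposition~\ref{espaces}. Everything afterwards is the short computation above, and the hypothesis $n>4$ enters exactly once, to guarantee $2n-4\ge 6$ so that Proposition~\ref{espaces} applies to $\Spin_{2n-4}$.
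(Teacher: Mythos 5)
Your proof is correct, but it takes a genuinely different route from the paper's. The paper first reduces, by passing to closures of conjugacy classes, to the case $r=4$, so that $g$ lies in a $\Spin_8$ subgroup; it then uses triality to bound the fixed spaces of the relevant involution classes on the three $8$-dimensional representations of $\Spin_8$ (the triality-stable class has a $4$-dimensional fixed space on each, while the three permuted classes have a $4$-dimensional fixed space on two of them), which settles $n=5$ since the half-spin module of $\Spin_{10}$ restricts to $\Spin_8$ as the sum of two of these; finally it inducts on $n$ via the restriction of $W$ to $\Spin_{2n-2}$. You instead split off a single nondegenerate $4$-space carrying two Jordan blocks of $\bar{g}$, restrict $W$ to $\Spin_4\times\Spin_{2n-4}$ as $(S_1^+\otimes S_2^+)\oplus(S_1^-\otimes S_2^-)$, and run the computation using the characteristic-$2$ identity $J\otimes J\cong J\oplus J$ together with Proposition \ref{espaces}\eqref{espaces.grp} applied to the $\Spin_{2n-4}$-component (which is legitimately noncentral when $s\ge 2$, since $Z(\Spin_{2n-4})(k)=1$ in characteristic $2$). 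Both arguments rest on the same classification of involutions from \cite{AschbacherSeitz}, \cite{LiebeckSeitz}, \cite{FGS}; yours avoids triality and the closure argument entirely, and has the added merit of locating exactly where $\tfrac{5}{8}\dim W$ is attained and of showing that the classes containing a non-totally-singular block only reach $\tfrac{1}{2}\dim W$. The paper's version is shorter because the specialization to $r=4$ disposes of all larger classes at once, whereas your case analysis must account for all block configurations; the one step you rightly flag as needing care --- that every involution of $\SO_{2n}$ in characteristic $2$ is an orthogonal sum of the two types of $4$-dimensional blocks plus a trivial summand --- is indeed exactly what the cited references supply.
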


\begin{proof}  By passing
to closures, we may assume that $r=4$.   Thus, $g \in \Spin_8 \le G$.   There is the class of long root elements
(which have $r=2$).  The largest class is invariant under graph automorphisms
and so has a $4$-dimensional fixed space on each of the three $8$ dimensional representations.   The other three
classes are permuted by the graph automorphisms.  Thus, it follows they have a $4$ dimensional fixed space
on two of the $8$-dimensional representations and a $5$-dimensional fixed space on the third such representation.
  Since the class of $g$ is invariant under triality, $g$
has a $4$-dimensional fixed space on each of the $8$-dimensional representations of $\Spin_8$.   Since the
a half-spin representation of $\Spin_{10}$ is a sum of two distinct half-spin representations for $\Spin_8$,
the result is true for $n=5$. 

  The result now follows by
induction (since $W$ is a direct sum of the two half spin representations of $\Spin_{2n-2}$).   
\end{proof}

\begin{lemma}  Suppose $\car k = 2$.  Let $G=\Spin_{18}$ or $\HSpin_{20}$ with $V$ a half spin representation of dimension $256$ or $512$ respectively.  
Then $G_v(k)=1$ for generic $v \in V$.
\end{lemma}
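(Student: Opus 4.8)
The plan is to run, in characteristic~$2$, the same machine as in the $\car k\ne2$ section. From the computer check we already know that $\Lie(G_v)=0$ for generic $v$, so $G_v$ is finite and \'etale and it remains only to prove that $G_v(k)=1$ for generic $v$. I would deduce this from Lemma~\ref{mother.lem}\eqref{mother.grp} applied to $G$ acting on $V$: the task becomes to establish
\[
\dim V^g + \dim g^G < \dim V
\]
for every noncentral unipotent $g\in G$ and every noncentral semisimple $g\in G$ of prime order. In characteristic~$2$ a semisimple element of order~$2$ is trivial, so the semisimple elements in play all have odd prime order. Granting the displayed inequality, Lemma~\ref{mother.lem}\eqref{mother.grp} gives that $G_v(k)$ is central in $G(k)$, hence (as $G$ is simple) contained in $Z(G)(k)$; and since $Z(\Spin_{18})\cong\mu_4$ and $Z(\HSpin_{20})\cong\mu_2$, both of which have trivial group of $k$-points in characteristic~$2$, this forces $G_v(k)=1$ and therefore $G_v=1$.

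For $g$ semisimple of odd prime order, the analysis given in the $\car k\ne2$ section for $n=18$ and $n=20$ carries over with only cosmetic changes: it is assembled from the explicit matrix model of $\SO_n$, triality on $\Spin_8$ (the square roots in the formulas~\eqref{triality} causing no trouble, since $g$ has odd order), and the isogeny $\Spin_6\cong\SL_4$, none of which is characteristic-sensitive, and the bound $\dim V^g\le\tfrac58\dim V$ of Proposition~\ref{espaces} holds for odd-order $g$ in any characteristic, its proof only manipulating square roots of the eigenvalues of~$g$. Thus for $\HSpin_{20}$ one gets $\dim V^g\le\tfrac58\dim V$ together with $\dim g^G\le\dim G-\rank G=180<\tfrac38\dim V$, and for $\Spin_{18}$ one repeats the case split on the number of distinct eigenvalues of the image of~$g$ in $\SO_{18}$, which there yields the sharper bounds $\dim V^g\le\tfrac14\dim V$ or $\dim V^g\le\tfrac12\dim V$ exactly when the conjugacy class is large.

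For $g$ noncentral unipotent I would split into three families. Long root elements: Proposition~\ref{espaces} gives $\dim V^g\le\tfrac34\dim V$, and the minimal unipotent class has dimension~$30$ in $\Spin_{18}$ and~$34$ in $\HSpin_{20}$, so the inequality holds with room to spare. Non-root involutions: the lemma immediately preceding this one gives $\dim V^g\le\tfrac58\dim V$, and the classification of involutions in characteristic~$2$ (\cite{AschbacherSeitz}, \cite{LiebeckSeitz}, \cite{FGS}) shows that their classes are small — for $\HSpin_{20}$ even the crude bound $\dim g^G\le\dim G-\rank G=180$ closes the inequality, while for $\Spin_{18}$ one only needs that the centralizer of a non-root involution has dimension $>57$. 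The remaining unipotents have a Jordan block of size $\ge3$ on the vector representation, and for these I would re-run the tensor-product and specialization arguments of Proposition~\ref{espaces} and of the $\car k\ne2$ section, now inserting the characteristic-$2$ structure of the small classes: for instance the regular unipotent of $\Spin_6\cong\SL_4$ still acts with a $1$-dimensional fixed space on each half-spin of $\Spin_6$, which is what the tensor argument needs, and one extracts $\dim V^g\le\tfrac12\dim V$ when there are at least two Jordan blocks of size~$\ge3$ and $\dim V^g\le\tfrac14\dim V$ when there is a Jordan block of size~$\ge5$ (using $\Spin_8$ and triality in place of $\Spin_6$), which with the class dimensions from \cite{LiebeckSeitz} gives the inequality in each case.

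The main obstacle is exactly this unipotent bookkeeping in characteristic~$2$: the unipotent classes of orthogonal and spin groups, their closure order, and their centralizer dimensions no longer match the characteristic-zero picture — for example the regular unipotent of $\SO_6$ has Jordan type $(3,3)$ rather than $(5,1)$, and $\SO_6$ has no Jordan block of size~$5$ — so every fixed-space estimate of the form $\dim V^g\le c\dim V$ and every class-dimension input must be re-derived from the characteristic-$2$ classification rather than imported. The numbers are tightest for $\Spin_{18}$, where $\dim V=256$ leaves almost no slack; there one must avoid the crude bound $\dim g^G\le\dim G-\rank G$ wherever it is too weak, replacing it by the genuine, much smaller, dimension of the relevant involution or low-order unipotent class, and in the closest cases sharpen the fixed-space estimate by descending to a $\Spin_{2r}$ subgroup and invoking triality.
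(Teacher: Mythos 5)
Your overall strategy matches the paper's: verify $\dim V^g+\dim g^G<\dim V$ elementwise, treat odd-order semisimple elements exactly as in the $\car k\ne2$ section, and use the preceding lemma together with the Aschbacher--Seitz/Liebeck--Seitz classification for involutions. Your numbers for the involution and long-root-element classes are right, and the reduction to $Z(G)(k)=1$ in characteristic $2$ is fine.

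The genuine problem is your third family of unipotents. You commit yourself to checking the inequality for \emph{every} unipotent element, and you then (correctly) observe that for elements with Jordan blocks of size $\ge 3$ this would require re-deriving all the characteristic-$2$ unipotent class data and fixed-space bounds --- which you flag as the ``main obstacle'' and do not actually carry out. But this entire family is vacuous: the criterion one actually needs (and the one the paper uses, following the argument of \cite[\S 10]{GG:simple} behind Lemma \ref{mother.lem}) is that the inequality hold for every noncentral element of \emph{prime order}, since a nontrivial subgroup of $G(k)$ must contain such an element. In characteristic $2$ a unipotent element of prime order is an involution, i.e.\ has all Jordan blocks of size $\le 2$ on the vector representation, so the involution analysis you already gave \emph{is} the whole unipotent case; there are no prime-order unipotents with a block of size $\ge 3$. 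You implicitly use the prime-order restriction for semisimple elements but forget it for unipotents. With that observation your argument closes and agrees with the paper's; without it, the part you identify as the hard core of the proof is both unnecessary and left undone.
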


\begin{proof}  This is proved in \cite{GurLawther} but we give a different proof.  As in the previous section, it suffices to show that
$\dim V^g  + \dim g^G < \dim V$ for every non-central $g \in G$ with $g$ of prime order.  If $g$ has odd prime order (and so is semisimple),
then the argument is exactly the same as in the previous section (indeed, it is even easier since there are no involutions
to consider).    Alternatively, since we know the result in characteristic $0$, it
follows that generic stabilizers have no nontrivial semisimple elements as in the proof of \cite[Lemma 10.3]{GG:simple}.

Thus, it suffices to consider $g$ of order $2$.   Let $r$ be the number of nontrivial Jordan blocks of $g$.  If 
$g$ is not a long root element, then 
  $\dim V^g  \le (5/8) \dim V$.  On the other hand,  $\dim g^G \le 99$ for $n=10$ and $79$ for $n=9$ by \cite{AschbacherSeitz}, \cite{LiebeckSeitz}, or  \cite{FGS}; in either case $\dim g^G < (3/8)\dim V$.
  
  The remaining case to consider is when $g$ is a long root element.   Then $\dim V^g = (3/4) \dim V$ while
  $\dim g^G = 34$ or $30$ respectively and again the inequality holds. 
\end{proof}

\section{Proof of Corollary \ref{ed}} \label{ed.sec}

For $n$ not divisible by 4, the (half) spin representation $\Spin_n$ is generically free by Theorem \ref{MT}, so by, e.g., \cite[Th.~3.13]{M:ed} we have:
\[
\ed(\Spin_n) \le \dim V - \dim \Spin_n.
\]
This gives the upper bound on $\ed(\Spin_n)$ for $n$ not divisible by 4.  For $n = 16$, we use the same calculation with $V$ the direct sum of the vector representation of $\Spin_{16}$ and a half-spin representation.  For $n \ge 20$ and divisible by 4, Theorem \ref{MT} gives that $\ed(\HSpin_n)$ is at most the value claimed; with this in hand, the argument in \cite[Th.~2.2]{ChM:edspin} (referring now to \cite{Loetscher:fiber} instead of \cite{BRV} for the stacky essential dimension inequality) establishes the upper bound on $\ed(\Spin_n)$ for $n \ge 20$ and divisible by 4.

It is trivially true that $\ed_2(\Spin_n) \le \ed(\Spin_n)$.  Finally, that $\ed_2(\Spin_n)$ is at least the expression on the right side of the display was proved in \cite[Th.~3-3(a)]{BRV} for $n$ not divisible by 4 and in  \cite[Th.~4.9]{M:chile}  for $n$ divisible by 4; the lower bound on $\ed_2(\HSpin_n)$ is from \cite[Remark 3-10]{BRV}.  $\hfill\qed$

\section{$\Spin_n$ for $6 \le n \le 12$ and characteristic $2$} \label{small.2}

Suppose now that $6 \le n \le 12$ and $\car k = 2$.
Let us now calculate the stabilizer in $\Spin_n$ of a generic vector $v$ in a (half) spin representation, which will justify those entries in Table \ref{stabilizers}.  For $n  = 6$, the $\Spin_6 \cong \SL_4$ and the representation is the natural representation.  For $n = 8$, the half-spin representation is indistinguishable from the vector representation $\Spin_8 \to \SO_8$ and again the claim is clear.  

For the remaining $n$, we verify that the $k$-points $(\Spin_n)_v(k)$ of the generic stabilizer are as claimed, i.e., that the claimed group scheme is the reduced sub-group-scheme of $(\Spin_n)_v$.  The cases $n = 9, 11, 12$ are treated in \cite[Lemma 2.11]{GLMS} and the case $n = 10$ is \cite[p.~496]{Liebeck:affine}.

For $n = 7$, view $\Spin_7$ as the stabilizer of an anisotropic vector in the vector representation of $\Spin_8$; it contains a copy of $G_2$.  As a $G_2$-module, the half-spin representation of $\Spin_8$ is self-dual and has composition factors of dimensions 1, 6, 1, so $G_2$ fixes a vector in $V$.  As $G_2$ is a maximal closed connected subgroup of $\Spin_7$, it is the identity component of the reduced subgroup of $(\Spin_7)_v$.

We have verified that the reduced sub-group-scheme of $(\Spin_n)_v$ agrees with the corresponding entry, call it $S$, in Table \ref{stabilizers}.  We now proceed as in \S\ref{spin20.2} and find a $w$ such that $\dim (\spin_n)_w = \dim S$, which shows that $(\Spin_n)_v$ is smooth, completing the proof of Table \ref{stabilizers} for $n \le 12$.

\section{$\Spin_{13}$ and $\Spin_{14}$ and characteristic $\ne 2$}  \label{stab1314}

In this section, we determine the stabilizer in $\Spin_{14}$ and $\Spin_{13}$ of a generic vector in the (half) spin representation $V$ of dimension 64.
We assume that $\car k \ne 2$ and $k$ is algebraically closed.

Let $C_0$ denote the trace zero subspace of an octonion algebra with quadratic norm $N$.  We may view the natural representation of $\SO_{14}$ as a sum $C_0 \oplus C_0$ endowed with the quadratic form $N \oplus -N$.  This gives an inclusion $G_2 \times G_2 \subset \SO_{14}$ that lifts to an inclusion $G_2 \times G_2 \subset \Spin_{14}$.  There is an element of order 4 in $\SO_{14}$ such that conjugation by it interchanges the two copies of $G_2$ --- the element of order 2 in the orthogonal group with this property has determinant $-1$ --- so the normalizer of $G_2 \times G_2$ in $\SO_{14}(k)$ is isomorphic to $((G_2 \times G_2) \rtimes \mu_4)(k)$ and in $\Spin_{14}$ it is $((G_2 \times G_2) \rtimes \mu_8)(k)$.

Viewing $V$ as an internal Chevalley module for $\Spin_{14}$ (arising from the embedding of $\Spin_{14}$ in $E_8$), it follows that $\Spin_{14}$ has an open orbit in $\mathbb{P}(V)$.  Moreover, the unique $(G_2 \times G_2)$-fixed line $kv$ in $V$ belongs to this open orbit, see \cite[p.~225, Prop.~11]{Popov:14}, \cite{Rost:14.1}, or \cite[\S21]{G:lens}.  That is, for $H$ the reduced sub-group-scheme of $(\Spin_{14})_v$, $H^\circ \supseteq G_2 \times G_2$.  By dimension count this is an equality.  A computation analogous to the one in the preceding paragraph shows that the idealizer of $\Lie(G_2 \times G_2)$ in $\so_{14}$ is $\Lie(G_2 \times G_2)$ itself, hence $\Lie((\Spin_{14})_v) = \Lie(H^\circ)$, i.e., $(\Spin_{14})_v$ is smooth.  It follows from the construction above that the stabilizer of $kv$ in $\Spin_{14}$ is all of $(G_2 \times G_2) \rtimes \mu_8$ (as a group scheme).  The element of order 2 in $\mu_8$ is in the center of $\Spin_{14}$ and acts as $-1$ on $V$, so the stabilizer of $v$ is $G_2 \times G_2$ as claimed in Table \ref{stabilizers}.

Now fix a vector $(c, c') \in C_0 \oplus C_0$ so that $N(c)$, $N(c')$ and $N(c) - N(c')$ are all nonzero.  The stabilizer of $(c,c')$ in $\Spin_{14}$ is a copy of $\Spin_{13}$, and the stabilizer of $v$ in $\Spin_{13}$ is its intersection with $G_2 \times G_2$, i.e., the product $(G_2)_c \times (G_2)_{c'}$.  Each term in the product is a copy of $\SL_3$ (see for example \cite[p.~507, Exercise 6]{KMRT}), as claimed in Table \ref{stabilizers}.  (On the level of Lie algebras and under the additional hypothesis that $\car k = 0$, this was shown by Kac and Vinberg in \cite[\S3.2]{GV13}.)
\section{$\Spin_{13}$ and $\Spin_{14}$ and characteristic 2} \label{D7.2.sec}

We will calculate the stabilizer in $\Spin_n$ of a generic vector in an irreducible (half-)spin representation for $n = 13, 14$ over a field $k$ of characteristic 2.

\begin{prop} \label{D7.2}
The stabilizer in $\Spin_{14}$ (over a field $k$ of characteristic 2) of a generic vector in a half-spin representation is the group scheme $(G_2 \times G_2)\rtimes \Z/2$.
\end{prop}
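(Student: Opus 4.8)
The plan is to follow the template already used for $\Spin_{13}$ and $\Spin_{14}$ in characteristic $\ne 2$ in \S\ref{stab1314}, importing the structural facts about the embedding $G_2 \times G_2 \subset \Spin_{14}$ and the internal Chevalley module structure of $V$, and then fixing up the two places where characteristic $2$ behaves differently: the computation of generic stabilizers is genuinely a group-scheme computation (so we must separately control $\Lie$), and the normalizer of $G_2 \times G_2$ acquires a graph-automorphism flavor rather than being twisted by $\mu_8$. First I would set up the same geometry: realize the natural module of $\SO_{14}$ (in characteristic $2$ one works with the quadratic form directly) as $C_0 \oplus C_0$ with the quadratic norm $N \oplus N$ of an octonion algebra, giving $G_2 \times G_2 \subset \SO_{14}$ and lifting to $\Spin_{14}$. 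The element interchanging the two octonion copies is now an involution (or an element of order dividing $4$) in the orthogonal group, and I need to identify precisely the finite group scheme it generates together with the center of $\Spin_{14}$; the claim is that modulo the central kernel acting trivially on $V$, this contributes exactly a $\Z/2$.

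Second, I would establish that $\Spin_{14}$ still has an open orbit in $\mathbb{P}(V)$ and that the $(G_2\times G_2)$-fixed line lies in it. The internal Chevalley module structure (from $\Spin_{14} \subset E_8$) is characteristic-independent as a statement about the grading, so the existence of a dense orbit and the position of the fixed line should transfer; alternatively one can cite \cite{GurLawther}. That gives that the reduced subgroup $H$ of $(\Spin_{14})_v$ has identity component containing $G_2 \times G_2$, and a dimension count forces $H^\circ = G_2 \times G_2$. Third — and this is the genuinely characteristic-$2$ step — I must show $(\Spin_{14})_v$ is \emph{smooth}, i.e., $\Lie((\Spin_{14})_v)$ has dimension $28 = \dim(G_2\times G_2)$ rather than something larger (in characteristic $2$ there are extra nilpotents in $\so_{14}$, and $G_2$-invariance is weaker). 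Following the recipe in \S\ref{spin20.2} and \S\ref{small.2}, I would exhibit an explicit vector $w \in V$ over $\F_2$ with $\dim (\spin_{14})_w = 28$ by a direct (computer-algebra) computation; since $\{w : \dim(\spin_{14})_w = 28\}$ is open and nonempty, the generic stabilizer has the same Lie algebra dimension, hence $(\Spin_{14})_v$ is smooth and equals the reduced group $H$ scheme-theoretically.

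Finally, having $(\Spin_{14})_v$ smooth with identity component $G_2 \times G_2$, I would pin down the component group. The stabilizer of the line $kv$ is a subgroup of the normalizer $N_{\Spin_{14}}(G_2 \times G_2)$; analyzing that normalizer via the orthogonal picture shows it is $(G_2 \times G_2)$ extended by the swap element together with the center. Intersecting with the stabilizer of $v$ itself (not just the line) and noting which central elements act trivially versus by a nontrivial scalar on $V$ — here the center of $\Spin_{14}$ is $\mu_4$ or $(\mu_2)^2$ depending on conventions, but in characteristic $2$ the scalar action on $V$ is what matters — I get exactly one extra $\Z/2$, giving $(G_2 \times G_2) \rtimes \Z/2$. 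The main obstacle I expect is the smoothness verification: confirming that in characteristic $2$ the idealizer of $\Lie(G_2 \times G_2)$ in $\so_{14}$ is no larger than $\Lie(G_2 \times G_2)$, which fails to be a clean Lie-theoretic argument and really does seem to require the explicit $\F_2$-point computation, plus care that the "generic vector" in characteristic $2$ still sees the same open orbit (the dense-orbit statement for internal Chevalley modules should be robust, but I would double-check it is not spoiled by inseparability of the relevant orbit map).
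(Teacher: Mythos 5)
Your overall architecture matches the paper's: embed $G_2\times G_2$, locate its unique fixed line in $V$, show that line lies in the open $\Spin_{14}$-orbit, verify smoothness of the stabilizer by an explicit $\F_2$ Lie-algebra computation, and then read off the component group from the normalizer on $K$-points (where, as you note, the swap fixes $v$ itself and not merely the line because the relevant character $\Z/2\to\Gm$ is trivial in characteristic $2$). However, your very first step has a genuine gap: in characteristic $2$ the identity $1_C$ lies in $C_0$ and spans the radical of the polar form of $N\vert_{C_0}$, so $C_0\oplus C_0$ with $N\oplus N$ is a $14$-dimensional quadratic space whose polar form has a $2$-dimensional radical containing the singular vector $(1_C,1_C)$; its orthogonal group is not the split $\SO_{14}$, and the embedding $G_2\times G_2\subset\SO_{14}$ does not exist in the form you describe. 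The paper repairs exactly this by a pushout construction that identifies the two radical lines, producing a chain $G_2\times G_2\to\SO_7\times\SO_7\to\SO_{13}\to\SO_{14}$ (and, on the module side, exhibiting $V\vert_{\Spin_7\times\Spin_7}$ as a tensor product of spin representations, so that the fixed line is $S\ot S$ with $S$ the socle of the uniserial $G_2$-module $C$). You would need to supply this or an equivalent construction.

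A second, smaller gap: you assert that ``the position of the fixed line should transfer'' to the open orbit by characteristic-independence of the internal Chevalley module. The existence of a dense orbit in $\mathbb{P}(V)$ is indeed robust, but the membership of the specific $(G_2\times G_2)$-fixed line in that orbit is not automatic and must be verified; the paper does this by Röhrle's criterion, checking that $G_2\times G_2$ is contained in no Levi subgroup of a parabolic of $\Spin_{14}$ (the only nonobvious case being a Levi of type $A_6$, ruled out by comparing composition factors of $V$). With those two points filled in, your proof coincides with the paper's; the smoothness step via a computer computation and the component-group analysis are exactly as in the paper.
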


We use the following construction.  Let $X \supset R$, $V_1$, $V_2$ be vector spaces endowed with quadratic forms $q_X$, $q_R := q_X\vert_R$, $q_1$, $q_1$ such that $q_R$ is totally singular; $q_X$, $q_1$, and $q_2$ are nonsingular; $R$ is a maximal totally singular subspace of $X$; and there exist isometric embeddings $f_i \!: (X, q_X) \injects (V_i, q_i)$.   For example, one could take $V_1$ and $V_2$ to be copies of an octonion algebra $C$, $R$ to be the span of the identity element $1_C$, and $X$ to be a quadratic \'etale subalgebra of $C$.  There is a natural quadratic form on the pushout $(V_1 \oplus V_2)/(f_1 -f_2)(X)$; if we write $V_i \cong V'_i \perp f_i(X)$, then the quadratic space is isomorphic to $V'_1 \perp V'_2 \perp X$.  We can perform a similar construction where the role of $V_i$ is played by the codimension-1 subspace $f_i(R)^\perp$ and the pushout is $(f_1(R)^\perp \oplus f_2(R)^\perp)/(f_1 - f_2)(R)$, giving a homomorphism of algebraic groups $B_{\ell_1} \times B_{\ell_2} \to B_{\ell_1 + \ell_2}$ where $2\ell_i + 2 = \dim V_i$.

\begin{proof}[Proof of Proposition \ref{D7.2}]
The 7-dimensional Weyl module of the split $G_2$ gives an embedding $G_2 \injects \SO_7$.  Combining this with the construction in the previous paragraph gives maps
\[
G_2 \times G_2 \to \SO_7 \times \SO_7 \to \SO_{13} \to \SO_{14}
\]
which lift to maps where every $\SO$ is replaced by $\Spin$.

Put $V$ for a half-spin representation of $\Spin_{14}$.  It restricts to the spin representation of $\Spin_{13}$.  Calculating the restriction of the weights of $V$ to $\Spin_7 \times \Spin_7$ using the explicit description of the embedding, we see that $V$ is the tensor product of the 8-dimensional spin representations of $\Spin_7$.  By triality, the restriction of one of the spin representations to $G_2$ is the action of $G_2$ on the octonions $C$, which is a uniserial module with 1-dimensional socle $S$ (spanned by the identity element in $C$) and 7-dimensional radical, the Weyl module of trace zero octonions.  The restriction of $V = C \ot C$ to the first copy of $G_2$ is eight copies of $C$, so has an 8-dimensional fixed space $S \ot C$.  As $(S \ot C)^{1 \times G_2} = S \ot S$, we find that $S \ot S$ is the unique line in $V$ stabilized by $G_2 \times G_2$.

We now argue that the $\Spin_{14}$-orbit of $S \ot S$ is open in $\mathbb{P}(V)$.  To see this, by \cite{Roe:certain}, it suffices to verify that $G_2 \times G_2$ is not contained in the Levi subgroup of a parabolic subgroup of $\Spin_{14}$.  This is easily verified; the most interesting case is where the Levi has type $A_6$, and $G_2 \times G_2$ cannot be contained in such because the restriction of $V$ to $A_6$ has composition factors of dimension 1, 7, 21, and 35.  We conclude that every nonzero $v \in S \ot S$ is a generic vector in $V$ and $(\Spin_{14})_v$ has dimension 28.

If one constructs on a computer the representation $V$ of the Lie algebra $\spin_{14}$ over a finite field $F$ of characteristic 2, then it is a matter of linear algebra to calculate the dimension of the stabilizer $(\spin_{14})_x$ of a random vector $x \in V$.  One finds for some $x$ that the stabilizer has dimension 28, which is the minimum possible, so by semicontinuity of dimension $\dim((\spin_{14})_v) = 28 = \dim(G_2 \times G_2)$.  That is, $(\Spin_{14})_v$ is smooth with identity component $G_2 \times G_2$.  Consequently we may compute $(\Spin_{14})_v$ by determining its $K$-points for $K$ an algebraic closure of $k$.  The map $\Spin_{14}(K) \to \SO_{14}(K)$ is an isomorphism of concrete groups.  The normalizer of $(G_2 \times G_2)(K)$ in the latter group is $(G_2 \times G_2)(K) \rtimes \Z/2$, where the nonidentity element $\tau \in \Z/2$ interchanges the two copies of $\SO_7(K)$, hence of $G_2(K)$.  As $\tau$ normalizes $(G_2 \times G_2)(K)$, it leaves the fixed subspace $S \ot S \ot K = Kv$ invariant, and we find a homomorphism $\chi \!: \Z/2 \to \Gm$ given by $\tau v = \chi(\tau)v$ which must be trivial because $\car K = 2$.
\end{proof}

The above proof, which is somewhat longer than some alternatives, was chosen because of the details it provides on the embedding of $G_2 \times G_2$ in $\Spin_{14}$.

\begin{prop} \label{B6.2}
The stabilizer in $\Spin_{13}$ (over a field of characteristic 2) of a generic vector in the spin representation 
is the group scheme $(\SL_2 \times \SL_2) \rtimes \Z/2$.
\end{prop}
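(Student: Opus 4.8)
The plan is to run the characteristic $\ne 2$ analysis of $\Spin_{13}$ from \S\ref{stab1314} and the characteristic $2$ analysis of $\Spin_{14}$ from Proposition \ref{D7.2} in tandem, treating the latter as a black box. First I would realize $\Spin_{13}$ as the stabilizer in $\Spin_{14}$ of a generic anisotropic vector $w$ in the $14$-dimensional vector representation $W$; under this embedding the half-spin representation of $\Spin_{14}$ restricts to the spin representation $V$ of $\Spin_{13}$, again of dimension $64$. By Proposition \ref{D7.2}, the stabilizer in $\Spin_{14}$ of a generic $v \in V$ is $(G_2 \times G_2) \rtimes \Z/2$ for the explicit embedding of $G_2 \times G_2$ given there, and one may take $v$ to span the $(G_2 \times G_2)$-fixed line $S \ot S$. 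Hence $(\Spin_{13})_v = (\Spin_{14})_v \cap (\Spin_{14})_w$, and it remains to compute the stabilizer in $(G_2 \times G_2) \rtimes \Z/2$ of a vector $w$ that is generic for the pair $(w,v)$. Restricting $W$ to $G_2 \times G_2$ through the octonion model, such a $w$ decomposes into components lying in the two $7$-dimensional pieces, so its connected stabilizer is a product of two copies of the stabilizer in $G_2$ of a generic vector $c$ in a $7$-dimensional module, while the order-$2$ factor realizing the interchange of the two copies of $G_2$ survives for the same reason it does in Proposition \ref{D7.2}: the homomorphism $\Z/2 \to \Gm$ by which it would act on $v$ is forced to be trivial in characteristic $2$.

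The heart of the matter, and the step I expect to be hardest, is the computation of $(G_2)_c$ for a generic $c$ in the relevant $7$-dimensional $G_2$-module (the Weyl module, realised on the trace-zero octonions $C_0$) in characteristic $2$ — this is exactly where the answer departs from the $\SL_3$ that occurs when $\car k \ne 2$. The vector $c$ generates a quadratic subalgebra $k[c]$ of the octonions whose separability type is characteristic-sensitive: when $\car k \ne 2$ it is \'etale and $(G_2)_c \cong \SL_3$, whereas when $\car k = 2$ the vector one must stabilize has trace $0$, so $k[c]$ is the ring of dual numbers and, after subtracting the appropriate multiple of $1_C$, $c$ becomes a square-zero octonion, i.e.\ an isotropic vector for the norm form. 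Thus in characteristic $2$ one is computing the stabilizer in $G_2$ of an isotropic vector of the $7$-dimensional module, not of an \'etale subalgebra, and that stabilizer has reductive part $\SL_2$, sitting over the unipotent radical of a maximal parabolic of $G_2$. Feeding this back through the reduction of the previous paragraph produces the asserted group scheme, its order-$2$ factor being the surviving interchange symmetry.

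It remains to pin down the scheme structure, which I would do exactly as elsewhere in the paper. To see that $(\Spin_{13})_v$ is smooth with the predicted identity component, and in particular has no spurious infinitesimal part, I would realize $V$ as a module for $\spin_{13}$ over $\F_2$ and check by linear algebra that the Lie-algebra stabilizer of a suitable vector has the expected dimension, then conclude by semicontinuity. With smoothness in hand one computes the full group scheme over an algebraic closure $K$ by identifying the normalizer of the connected stabilizer inside $\SO_{13}(K)$, observing that it contributes only the order-$2$ symmetry, and ruling out any nontrivial scaling of $v$ because the resulting character $\Z/2 \to \Gm$ is trivial in characteristic $2$. The main obstacle throughout is the characteristic $2$ octonion geometry of the middle paragraph — recognizing that the ``generic'' vector being stabilized is effectively isotropic — together with the bookkeeping of the resulting non-reductive part and its interaction with the interchange symmetry; the $\F_2$ dimension count is the independent check that keeps that bookkeeping honest.
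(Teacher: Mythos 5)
Your overall route is the same as the paper's: realize $(\Spin_{13})_v$ as $(\Spin_{14})_y \cap (\Spin_{14})_v$ with $(\Spin_{14})_v = (G_2 \times G_2)\rtimes\Z/2$ supplied by Proposition \ref{D7.2}, reduce to computing the stabilizer of $y$ in $G_2\times G_2$, keep the interchange involution because the resulting character $\Z/2\to\Gm$ is trivial in characteristic $2$, and certify smoothness by an $\F_2$ Lie-algebra dimension count. The gap is in the central step. You restrict $W$ to $G_2\times G_2$ as if it were $C_0\perp C_0$, so that $y$ splits into two generic trace-zero octonions; but in characteristic $2$ that orthogonal decomposition does not exist --- the polar form of $N\perp N$ on $C_0\oplus C_0$ has the two-dimensional radical $k1_C\oplus k1_C$ --- which is exactly why \S\ref{D7.2.sec} replaces the direct sum used in \S\ref{stab1314} by the pushout construction. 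Tracing a generic $y$ through that construction, the two octonions one must stabilize are generic elements of the full algebra $C$, with \emph{nonzero} trace; each generates a quadratic \emph{\'etale} subalgebra of $C$, whose elementwise stabilizer in $G_2$ is $\SL_3$. Your observation that a generic element of $C_0$ becomes isotropic after translating by a multiple of $1_C$, so that its stabilizer is the $8$-dimensional stabilizer of an isotropic vector (an $\SL_2$ sitting over a $5$-dimensional unipotent radical), is correct as a statement about $C_0$, but it answers the wrong question.

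Consequently the identity component you obtain, a $16$-dimensional group with a $10$-dimensional unipotent radical, should instead be the reductive group $\SL_3\times\SL_3$ (also of dimension $16$): the paper's proof and the $n=13$ entry of Table \ref{stabilizers} both give $(\SL_3\times\SL_3)\rtimes\Z/2$, and the ``$\SL_2\times\SL_2$'' in the statement you were handed is a typo (that group has dimension $6$, incompatible with the bound $\dim (\Spin_{13})_v \ge 16$ forced by the characteristic-$0$ answer and with the dimension $16$ computed in the paper's own proof). Note also that the $\F_2$ computation you invoke as a safeguard only bounds $\dim(\spin_{13})_v$; it would expose a dimension-$6$ claim but cannot distinguish your $16$-dimensional non-reductive candidate from the correct $16$-dimensional reductive one, so it does not protect against the place where the argument actually goes astray.
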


\begin{proof}
We imitate the argument used in \S\ref{stab1314}.  View $\Spin_{13}$ as $(\Spin_{14})_y$ for an anisotropic $y$ in the 14-dimensional vector representation of $\Spin_{14}$.  That representation, as a representation of $\Spin_{13}$, has socle $ky$ and radical $y^\perp$.  Let $v$ be a generic element of the spin representation $V$ of $\Spin_{13}$.  Our task is to determine the group
\begin{equation} \label{13.int}
(\Spin_{13})_v = (\Spin_{14})_y \cap (\Spin_{14})_v.
\end{equation}

The stabilizer $(\Spin_{14})_v$ described above is contained in a copy $(\Spin_{14})_e$ of $\Spin_{13}$ where $ke$ is the radical of the 13-dimensional quadratic form given by the pushout construction.  As $v$ is generic, $y$ and $e$ are in general position, so tracing through the pushout construction we see that the intersection \eqref{13.int} contains the product of 2 copies of the stabilizer in $G_2$ of a generic octonion $z$.  The quadratic \'etale subalgebra of $C$ generated by $z$ has normalizer $\SL_3 \rtimes \Z/2$ in $G_2$, hence the stabilizer of $z$ is $\SL_3$.  We conclude that, for $K$ an algebraic closure of $k$,  the group of $K$-points of $(\Spin_{13})_v$ equals that of the claimed group, hence the stabilizer has dimension 16.  Calculating with a computer as in the proof for $\Spin_{14}$, we find that $\dim (\spin_{13})_v \le 16$, and therefore the stabilizer of $v$ is smooth as claimed.
\end{proof}

\appendix
\section{Generic stabilizers associated with a peculiar half-spin representation}
\begin{center}
\emph{by Alexander Premet}
\end{center}

\setcounter{subsection}{-1}
\subsection{The main theorem} Throughout this appendix we work over an algebraically closed field $k$ of characteristic $2$.
Let $G=\HSpin_{16}(k)$ and let  $V$ be the natural (half-spin) $G$-module. The theorem stated below describes the generic stabilizers for the actions of $G$ and $\g=\Lie(G)$ on $V$.

\begin{thm} \label{Premet}
The following are true:
\begin{enumerate}
\renewcommand{\theenumi}{\roman{enumi}}
\item \label{Premet.1}
There exists a non-empty Zariski open subset $U$ in $V$ such that for every $x\in U$ the stabilizer $G_x$ is isomorphic to $(\Z/2\Z)^4$.

\item \label{Premet.2}
For any $x\in U$ the stabilizer $\g_x$ is a $4$-dimensional toral subalgebra of $\g$.

\item \label{Premet.3}
If $x,x'\in U$ then the stabilizers $G_x$ and $G_{x'}$ and the infinitesimal stabilizers $\g_x$ and $\g_{x'}$ are $G$-conjugate.

\item \label{Premet.4}
The scheme-theoretic stabilizer of any $x\in U$ is isomorphic to $(\Z/2\Z)^4\times (\mu_2)^4$.
\end{enumerate}
\end{thm}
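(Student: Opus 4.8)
The plan is to work inside the $\Z/2\Z$-graded Lie algebra $\Lie(E_8)=\g\oplus V$, where $\g=\Lie(\HSpin_{16})$ and $V$ is the half-spin module; this grading (the one attached to $D_8\subset E_8$) persists in characteristic $2$, although, unlike in Lemma~\ref{Spin16}, it is no longer the $\pm1$-eigenspace decomposition of an involution. One has $[\g,\g]\subseteq\g$, $[\g,V]\subseteq V$, $[V,V]\subseteq\g$, and the key structural remark is that for every $x\in V$ the centralizer $\Lie(E_8)_x$ is again graded: if $a\in\g$, $v\in V$ and $[a+v,x]=0$, then the components $[a,x]\in V$ and $[v,x]\in\g$ vanish separately, so $\Lie(E_8)_x=\g_x\oplus V_x$ with $x\in V_x$. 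Since $G=\HSpin_{16}(k)$ acts on $V$ through the adjoint action of $E_8$, we also have $G_x=C_{E_8}(x)(k)\cap G$, and at the scheme level the stabilizer of $x$ in $\HSpin_{16}$ is the scheme-theoretic intersection of the centralizer subgroup-scheme $C_{E_8}(x)$ with the subgroup $\HSpin_{16}$ of $E_8$. Everything thus reduces to describing $\Lie(E_8)_x$ and $C_{E_8}(x)$ for $x$ ranging over a dense open subset $U$ of $V$.

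For part~(\ref{Premet.2}) and the infinitesimal side of (\ref{Premet.1}) I would argue as in \S\ref{spin20.2} and \S\ref{D7.2.sec}: realize $V$ as a module for the restricted Lie algebra $\g$ over $\F_2$, compute the annihilator $\g_x$ of a vector with indeterminate coordinates (or, more cheaply, of several random $x\in V(\F_2)$, confirming the value is stable), and verify by a finite calculation that $\dim\g_x=4$ generically and that the generic $\g_x$ is toral, by evaluating the $2$nd-power map on it. The set of $x$ for which $\g_x$ is toral is open --- it is the complement of the projection to $V$ of the closed set $\{(x,[y])\in V\times\mathbb{P}(\mathcal N_\g):y\cdot x=0\}$, where $\mathcal N_\g$ is the nilpotent cone of $\g$ --- and the locus where $\dim\g_x$ attains its minimum is open, so both properties hold on a dense open $U_0\subseteq V$. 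This yields (\ref{Premet.2}) and shows $\Lie(G_x)$ is a $4$-dimensional toral subalgebra for $x\in U_0$; the only genuinely computational input is the characteristic-$2$ verification of the value $4$ (consistency with \cite[Th.~3.6]{StradeF} and the second part of Lemma~\ref{mother.lem} then guarantees no nilpotent or non-toral summand can intrude).

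For the finite part of (\ref{Premet.1}) and all of (\ref{Premet.4}) I would analyze $C_{E_8}(x)$ for generic $x$. First, the reduced stabilizer $(G_x)_{\mathrm{red}}$ is finite: its identity component is a connected subgroup of $\HSpin_{16}$ with toral Lie algebra inside $\g_x$, hence a torus $T'$, and since a generic $v\in V$ lies in no $V^{T'}$ with $\dim T'>0$ (each such fixed space is proper and there are finitely many conjugacy classes of tori), $T'$ is trivial. Thus $G_x$ is a finite group scheme with $\Lie(G_x)=\g_x$ a $4$-dimensional toral restricted Lie algebra, so (over $k$ algebraically closed) $G_x=G_x^{\circ}\rtimes G_x(k)$, and $G_x^{\circ}$ is the unique height-one infinitesimal group scheme with restricted Lie algebra $\g_x$, namely $(\mu_2)^4$, once one knows $|G_x^{\circ}|=2^4$. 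To finish, I would pin down $C_{E_8}(x)$ on the nose: I expect $C_{E_8}(x)_{\mathrm{red}}$ to be a maximal torus $T$ of $E_8$ with $\g_x\subseteq\Lie(T)$, so that the stabilizer scheme of $x$ in $\HSpin_{16}$ is the scheme-theoretic intersection $T\cap\HSpin_{16}$, and then compute this intersection as a group scheme, obtaining $(\Z/2\Z)^4\times(\mu_2)^4$; this gives simultaneously $|G_x^{\circ}|=2^4$, $G_x(k)\cong(\Z/2\Z)^4$, and the scheme-theoretic description, settling (\ref{Premet.1}) and (\ref{Premet.4}). Part~(\ref{Premet.3}) then follows from irreducibility of $V$: the map sending $x$ to the $G$-conjugacy class of the pair $(\g_x,G_x)$ is constant on a dense open set --- one moves $x$ into a fixed ``Cartan slice'' of $V$ for the graded action, and the locus where $(\g_x,G_x)$ is $G$-conjugate to the value there is open and non-empty --- so, after shrinking $U$, it is constant on $U$.

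I expect the main obstacle to be the exact identification of $C_{E_8}(x)$, equivalently of the scheme $T\cap\HSpin_{16}$, over a field of characteristic $2$. The $\F_2$-computation together with semicontinuity delivers dimensions and $k$-points, but not the nilpotent structure of the local ring of the stabilizer, and the characteristic-zero shortcut of an involution inverting a maximal torus (Lemma~\ref{Spin16}) is unavailable here. One therefore needs either an explicit coordinate description of the stabilizer or a conceptual argument that it is diagonalizable (the kernel of a system of characters on a maximal torus of $E_8$), after which $(\Z/2\Z)^4\times(\mu_2)^4$ is forced by its order and by the $4$-dimensionality of its Lie algebra. (That $(G_x)_{\mathrm{red}}$ is finite is also obtainable from \cite{GurLawther}, via a regular nilpotent of $\Lie(E_8)$ lying in $V$ with small stabilizer in $\g$; one could bootstrap from that input instead of from the torus argument above.)
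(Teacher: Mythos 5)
Your starting point (the graded decomposition $\gt=\g\oplus V$ inside $E_8$) is the same as the appendix's, and your computational route to part \eqref{Premet.2} is workable, but the argument you build for parts \eqref{Premet.1} and \eqref{Premet.4} rests on an expectation that is false in characteristic $2$. You predict that $C_{E_8}(x)_{\mathrm{red}}$ is a maximal torus for generic $x$, so that the stabilizer scheme is diagonalizable and its structure is ``forced by its order and the $4$-dimensionality of its Lie algebra.'' It is not: the appendix shows that for $x$ in the slice $\mathfrak{r}^\circ$ the group $G_x(k)\cong(\Z/2\Z)^4$ is generated by elements $n_i=s_{\gamma_i}h_i$ (and products thereof) having \emph{nontrivial} image in the Weyl group $W$. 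These are involutions, hence unipotent when $p=2$, and lie in no torus; since a reduced subgroup of a torus consists of semisimple elements, $C_{E_8}(x)_{\mathrm{red}}$ cannot be a maximal torus. Likewise $(\Z/2\Z)^4\times(\mu_2)^4$ is not diagonalizable in characteristic $2$ (the constant group $\Z/2\Z$ is \'etale and unipotent there, while $\mathrm{Diag}(\Z/2\Z)=\mu_2$ is infinitesimal), so a diagonalizability argument would instead force the characteristic-$0$ answer $(\mu_2)^8$ of Lemma \ref{Spin16} --- precisely the mechanism (an involution inverting a maximal torus, putting a Cartan subalgebra of $\gt$ inside $V$) that is unavailable here, since in characteristic $2$ the relevant maximal torus lies in $G$ and $\t\subseteq\g$. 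The appendix's substitute is the missing idea your proposal needs: eight pairwise orthogonal roots $\gamma_1,\dots,\gamma_8\in\tilde{\Phi}_1$ read off from the Hadamard--Sylvester matrix $H_8$, whose coroots span a Lagrangian $\t_0\subset\t$; the iterated squares $x^{[2]^k}$ of $x\in\mathfrak{r}^\circ$ span $\t_0$, forcing $G_x\subseteq N_G(T)=W\ltimes T$, after which \eqref{Premet.1} is a Weyl-group computation and \eqref{Premet.4} follows from $\mathbf{G}_x\cong (\mathbf{G}_x)_{\mathrm{red}}\times\mathbf{T}_x$ with $\mathbf{T}_x\cong(\mu_2)^4$ computed from the index-$16$ sublattice $\Z\Gamma\subset\Z\tilde{\Phi}$.

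Two further gaps. First, your proof that $(G_x)_{\mathrm{red}}^\circ$ is trivial cites ``finitely many conjugacy classes of tori''; there are infinitely many conjugacy classes of positive-dimensional subtori, and although there are only finitely many possible fixed spaces $V^{T'}$ up to conjugacy, mere properness of each is not enough --- you would need $\dim V^{T'}+\dim G-\dim N_G(V^{T'})<\dim V$ for each of them, a genuine finite verification that cannot be waved away (note the cruder bound $\dim V^g+\dim g^G<\dim V$ must fail for $\HSpin_{16}$, since the generic stabilizer is nontrivial). Your fallback to \cite{GurLawther} for finiteness is legitimate. Second, for \eqref{Premet.3} you assert that the locus where $(\g_x,G_x)$ is conjugate to a fixed model is open; conjugacy of stabilizers is not an open condition in general. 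The appendix obtains \eqref{Premet.3} only after proving that $\psi\colon G\times\mathfrak{r}\to V$ is dominant (a fiber-dimension argument hinging on $\cc_{\gt}(\t_0)=\Lie(S)$ for the $A_1^8$-subgroup $S$ attached to the $\gamma_i$), and then writing down an explicit $h\in T$ with $hG_xh^{-1}=G_{x'}$ for $x,x'\in\mathfrak{r}^\circ$. Without constructing such a slice and proving its $G$-saturation is dense, neither \eqref{Premet.1} nor \eqref{Premet.3} is within reach.
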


A more precise description of $G_x$ and $\g_x$ with $x\in U$ is given in \S\ref{Premet.A4}. It should be mentioned here that our Theorem~\ref{Premet} can also be deduced from more general invariant-theoretic results recently announced by Eric Rains.

\subsection{Preliminary remarks and recollections} \label{Premet.A1}
Let $\Gt$ be a simple algebraic group of type $\rm{E}_8$ over $k$ and $\gt={\rm Lie}(\Gt)$. The Lie algebra $\gt$ is simple and carries an
$(\Ad\,G)$-equivariant $[p]$-th power map $x\mapsto x^{[p]}$. Since $p=2$, Jacobson's formula for $[p]$-th powers is surprisingly simple: we have that
\[
(x+y)^{[2]}=x^{[2]}+y^{[2]}+[x,y]\qquad \left(\forall\, x,y\in \gt\right).
\]
Let $T$ be a maximal torus of $\Gt$ and $\t=\Lie(T)$. Write $\tilde{\Phi}$ for the root system of $\Gt$ with respect to $T$.  In what follows we will make essential use of Bourbaki's description of roots in $\tilde{\Phi}$; see \cite[Planche~VII]{Bou:g4}. More precisely, let
$\mathbf{E}$ be an $8$-dimensional Euclidean space over $\mathbb{R}$
with orthonormal basis $\{\varepsilon_1,\ldots,\varepsilon_8\}$. Then $\tilde{\Phi}=\Tilde{\Phi}_0\sqcup \tilde{\Phi}_{1}$ where 
\[
\tilde{\Phi}_0=\left\{\pm\varepsilon_i\pm\varepsilon_j \mid 1\le i<j\le 8\right\}
\]
and 
\[
\tilde{\Phi}_{1}=\left\{\textstyle{\frac{1}{2}}\textstyle{\sum}_{i=1}^8\,(-1)^{\nu(i)}\varepsilon_i \mid \textstyle{\sum}_{i=1}^8\,\nu(i)\in 2\Z\right\}.
\]
The roots $\alpha_1=\frac{1}{2}(\varepsilon_1+\varepsilon_8-\varepsilon_2-\varepsilon_3-
\varepsilon_4
-\varepsilon_5-\varepsilon_6-\varepsilon_7)$,
$\alpha_2=\varepsilon_1+\varepsilon_2$,
$\alpha_3=\varepsilon_2-\varepsilon_1$,
$\alpha_4=\varepsilon_3-\varepsilon_2$,
$\alpha_5=\varepsilon_4-\varepsilon_3$,
$\alpha_6=\varepsilon_5-\varepsilon_4$,
$\alpha_7=\varepsilon_6-\varepsilon_5$,
$\alpha_8=\varepsilon_7-\varepsilon_6$
form a basis of simple roots in $\tilde{\Phi}$ which we denote by $\tilde{\Pi}$.
\noindent
Let $(\,\cdot\,\vert\,\cdot\,)$ be the scalar product of $\mathbf{E}$. It is invariant under the action of the Weyl
group $W(\tilde{\Phi})\subset {\rm GL}(\mathbf{E})$.

Given $\alpha\in\tilde{\Phi}$ we denote by $U_\alpha$ and $e_\alpha$ the unipotent root subgroup of $\Gt$ and a root vector in $\Lie(U_\alpha)$. Let $V$ be the $k$-span of
of all $e_\alpha$ with $\alpha\in\tilde{\Phi}_{1}$ and write $G$ for the subgroup of $\Gt$ generated by $T$ and all $U_\alpha$ with $\alpha\in\tilde{\Phi}_0$. It is well known (and straightforward to see) that the algebraic $k$-group $G$ is isomorphic to $\HSpin_{16}(k)$ and the $G$-stable subspace $V$ of $\gt$ is isomorphic to the natural  (half-spin) $G$-module: one can choose a Borel subgroup $B$ of $G$ in such a way that the fixed-point space $V^{R_u(B)}$ is spanned by $e_{-\alpha_1}$. We write $W$ for the subgroup of $W(\tilde{\Phi})$ generated all orthogonal reflections $s_\alpha$ with $\alpha\in\tilde{\Phi}_0$. Clearly, $W\cong N_G(T)/T$ is the Weyl group of $G$ relative to $T$. Since $G$ has type ${\rm D}_8$ the group $W$ is a semidirect product of its subgroup $W_0\cong\mathfrak{S}_8$ acting by permutations of the set $\{\varepsilon_1,\ldots,\varepsilon_8\}$
and its abelian normal subgroup $A\cong (\Z/2\Z)^7$
consisting of all maps $\varepsilon_i\mapsto (\pm 1)_i\varepsilon_i$ with $\prod_{i=1}^8(\pm 1)_i=1$; see \cite[Planche~IV]{Bou:g4}.

We may (and will) assume further that the $e_\alpha$'s are obtained by base change from a Chevalley $\Z$-form, $\gt_\Z$, of a complex Lie algebra of type ${\rm E}_8$. Since the group $\Gt$ is a simply connected the nonzero elements  $h_\alpha:=[e_\alpha,e_{-\alpha}]\in\t$  with $\alpha\in\tilde{\Phi}$ span $\t$. They have the property that $[h_\alpha,e_{\pm \alpha}]=\pm 2e_{\pm\alpha}=0$ and $h_\alpha=h_{-\alpha}$ for all $\alpha\in\tilde{\Phi}$.
It is well known that $e_\alpha^{[2]}=0$ and $h_\alpha^{[2]}=h_\alpha$ for all
$\alpha\in\tilde{\Phi}$.
The set $\{h_\alpha \mid \alpha\in\tilde{\Pi}\}$ is a $k$-basis of $\t$. Since $\gt$ is a simple Lie algebra, for every nonzero $t\in\t$ there is a simple root
$\beta\in\tilde{\Pi}$ such that $({\rm d}\beta)_e(t)\ne 0$. This implies that $\t$ admits a non-degenerate $W(\tilde{\Phi})$-invariant symplectic bilinear form
$\langle\,\cdot\,,\,\cdot\,\rangle$ such that
$\langle h_\alpha,h_\beta\rangle=(\alpha \vert \beta)\!
\mod 2$ for all
$\alpha,\beta\in\tilde{\Phi}.$

\subsection{Orthogonal half-spin roots and
Hadamard--Sylvester matrices} \label{Premet.A2}
Following the Wikipedia webpage on Hadamard matrices
we define the matrices $H_{2^k}$ of order $2^k$, where $k\in\Z_{\ge 0}$,  by setting $H_1=[1]$ and
$$H_{2^{k+1}}=\left[\begin{array}{rr}H_{2^k}&H_{2^k}\\
H_{2^k}&-H_{2^k}\end{array}\right]=H_{2}\otimes H_{2^k}$$ for $k\ge 0$.
These Hadamard matrices were first introduced by Sylvester in 1867 and they have the property that $H_{2^k}\cdot H_{2^k}^{\rm T}=2^k\cdot I_{2^k}$ for all $k$.
We are mostly interested in
$$
H_8=H_2\otimes H_2\otimes H_2=\left[\begin{array}{rrrrrrrr}1&1&1&1&1&1&1&1\\
1&-1&1&-1&1&-1&1&-1\\
1&1&-1&-1&1&1&-1&-1\\
1&-1&-1&1&1&-1&-1&1\\
1&1&1&1&-1&-1&-1&-1\\
1&-1&1&-1&-1&1&-1&1\\
1&1&-1&-1&-1&-1&1&1\\
1&-1&-1&1&-1&1&1&-1
\end{array}\right].$$
To each row $r_i=(r_{i1},\ldots, r_{i8})$ of $H_8$ we assign the root $\gamma_i=\frac{1}{2}(r_{i1}\varepsilon_1+\cdots+r_{i8}\varepsilon_8)$. This way we obtain $16$ distinct roots  $\pm \gamma_1,\ldots,\pm\gamma_8$ in $\tilde{\Phi}_1$ with the property that $(\gamma_i\vert\gamma_j)=0$ for all $i\ne j$.
As $\pm\gamma_i\pm \gamma_j\not\in\tilde{\Phi}$ for $i\ne j$, the semisimple regular subgroup $S$ of $\Gt$ generated by $T$ and all $U_{\pm \gamma_i}$ is connected and has type ${\rm A}_1^8$.
It is immediate from the Bruhat decomposition in $\Gt$ that
$G\cap S=N_G(T)\cap N_S(T).$

Using the explicit form of the simple roots $\alpha_1,\ldots,\alpha_8$ it is routine to determine  the matrix
$M:=\big[(\gamma_i\vert\alpha_j)\big]_{1\le i,j\le 8}$. It has the following form:
$$
M=\left[\begin{array}{rrrrrrrr}
-1&1&0&0&0&0&0&0\\
0&0&-1&1&-1&1&-1&1\\
0&1&0&-1&0&1&0&-1\\
1&0&-1&0&1&0&-1&0\\
0&1&0&0&0&-1&0&0\\
1&0&-1&1&-1&0&1&-1\\
1&1&0&-1&0&0&0&1\\
0&0&-1&0&1&-1&1&0
\end{array}\right].$$ It is then straightforward to check that $M$ is row-equivalent over the integers to
a block-triangular matrix
$M'=\left[\begin{array}{cc}
M_1& M_2\\
O_4&2M_3
\end{array}\right]$ with $M_1,M_2,M_3\in{\rm Mat}_4(\Z)$ and
$\det(M_1)=\det(M_3)=1$.
From this it follows that $\gamma_1,\ldots,\gamma_8$ span $\mathbf{E}$ over $\mathbb{R}$ and  $h_{\gamma_1},\ldots,h_{\gamma_8}$ span a maximal
($4$-dimensional) totally isotropic subspace of the symplectic space $\t$. We call it $\t_0$.

\subsection{A dominant morphism} \label{Premet.A3}
 Put $\Gamma=\{\gamma_1,\ldots,\gamma_8\}$ and let $\mathfrak{r}$ denote the subspace of $V$ spanned by $e_\gamma$ with $\gamma\in\pm\Gamma$. If $x=\sum_{i=1}^8(\lambda_ie_{\gamma_i}+
\mu_i e_{-\gamma_i})\in \mathfrak{r}$ then
Jacobson's formula shows that $x^{[2]}=\sum_{i=1}^8(\lambda_i\mu_i)h_{\gamma_i}\in\t_0$.
It follows that
\begin{equation}\label{p-th} x^{[2]^{k+1}}=\,\textstyle{\sum}_{i=1}^8
(\lambda_i\mu_i)^{2^k}
h_{\gamma_i}\qquad\big(\forall\, k\ge 0\big).
\end{equation}
Our discussion at the end of \S\ref{Premet.A2} shows that $\t_0$ has a basis $t_1,\ldots, t_4$ contained in the $\mathbb{F}_2$-span of $\{h_\gamma\,|\,\,\gamma\in\Gamma\}$. Since $h_\alpha^{[2]}=h_\alpha$ for all roots $\alpha$,  we have that $t_i^{[2]}=t_i$ for $1\le i\le 4$.
In view of (\ref{p-th}) this yields that the subset of $\mathfrak{r}$ consisting of all
$x$ as above such that
$\lambda_i\mu_i\ne \lambda_j\mu_j$ for $i\ne j$ and $\{x^{[2]^k}\,|\,\,1\le k\le
4\}$ spans $\t_0$ is non-empty and Zariski open in $\mathfrak{r}$. We call this subset $\mathfrak{r}^\circ$ and consider the morphism
$$\psi\colon\,G\times \mathfrak{r}\longrightarrow\, V,\ \ \,(g,x)\mapsto\,(\Ad\, g)\cdot x.$$ Note that
$\dim (G\times\mathfrak{r})=120+16=136$ and $\dim V=128$.
By the theorem on fiber dimensions of a morphism, in order to show that $\psi$ is dominant it suffices to find a point $(g,x)\in G\times \mathfrak{r}$ such that all components of $\psi^{-1}((\Ad\,g)\cdot x)$ containing $(g,x)$ have dimension $\le 8$.

We take $x\in\mathfrak{r}^\circ$ and $g=1_{\Gt}$. Clearly, $\psi^{-1}(x)\subset
\{(g,y)\in G\times\mathfrak{r}\,|\,\,y\in (\Ad\, G)\cdot x\}$.
If $(g,y)\in\psi^{-1}(x)$ then
$y\in\mathfrak{r}$ and $(\Ad\, g)^{-1}$ maps the $k$-span, $\t(x)$, of $\{x^{[2]^k}\,|\,\,1\le k\le 4\}$ onto 
the $k$-span, $\t(y)$, of $\{y^{[2]^k}\,|\,\,1\le k\le 4\}$. As $y^{[2]}\in\t_0$ and $\t_0$ is a restricted subalgebra of $\t$, this implies that
$\t(x)=\t(y)=\t_0$.
It follows that $\Ad\,g$ preserves  the Lie subalgebra $\cc_{\g}(\t_0)$ of $\g$. The centralizer $\cc_{\gt}(\t_0)$ is spanned by $\t$ and all root vectors $e_\alpha$ such that
$\langle h_\alpha,h_{\gamma_i}\rangle=({\rm d}\alpha)_e(h_{\gamma_i})=0$ for $1\le i\le 8$.
As $\t_0$ is a maximal totally isotropic subspace of the symplectic space $\t$, our concluding remark in \S\ref{Premet.A2} shows that $\cc_{\gt}(\t_0)=\Lie(S)$. Since $\cc_\g(\t_0)=\g\cap\Lie(S)=\t$ we obtain that $g\in N_G(T)$. But then $\psi^{-1}(x)\subseteq \{(g,(\Ad\,g)^{-1}\cdot x)\in G\times\mathfrak{r}^\circ\,|\,\,g\in N_G(T)\}$. Since $\dim N_G(T)=\dim T=8$, {\it all} irreducible components of $\psi^{-1}(x)$ have dimension $\le 8$. We thus deduce that the morphism $\psi$ is dominant. As the set $G\times \mathfrak{r}^\circ$ is Zariski open in $G\times \mathfrak{r}$, the
$G$-saturation of $\mathfrak{r}^\circ$ in $V$ contains a Zariski open subset of $V$.

 \subsection{Generic stabilizers} \label{Premet.A4}
Let $x=\sum_{i=1}^8\lambda_ie_{\gamma_i}+\sum_{i=1}^8
\mu_ie_{-\gamma_i}\in\mathfrak{r}^\circ$.
In view of our discussion in \S\ref{Premet.A3} we now need to determine the
stabilizer $G_x$. If $g\in G_x$ then $\Ad\,g$ fixes $\t_0={\rm span}\{x^{[2]^i}\,|\,\,1\le i\le 4\}$ and hence preserves $\cc_\g(\t_0)=\t$. This yields
$G_x\subseteq N_G(T)$. Working over a field of characteristic $2$ has some advantages: after reduction modulo $2$ we are no longer affected by the ambiguity in the choice of a Chevalley basis in $\gt_\Z$ and the torus $T$ has no elements of order $2$. It follows that
$N_{\Gt}(T)$ contains a subgroup
isomorphic $W(\tilde{\Phi})$ which intersects trivially with $T$.
In the notation of \cite[\S 3]{St} this group is generated by all elements $\omega_\alpha=w_\alpha(1)$ with $\alpha\in\tilde{\Phi}$. As a consequence, $W$ embeds into $N_G(T)$ in such a way that
$N_G(T)=W\ltimes T$.

Our discussion in \S\ref{Premet.A2} implies that for any
$\alpha\in\tilde{\Pi}$ the element $16\alpha\in\Z\tilde{\Phi}$ lies in
the $\Z$-span of $\gamma_1\ldots,\gamma_8$. Since $T$ has no elements of order $2$ and $\Gt$ is a group of adjoint type, it follows that
for any collection $(t_1,\ldots,t_8)\in (k^\times)^8$ there exists a unique element $h=h(t_1,\ldots,t_8)\in T$ with $\gamma_i(h)=t_i$ for all $1\le i\le 8$. Conversely, any element of $T$ has this form. As a consequence, $\Gt_x\cap T=\{1_{\Gt}\}$.
For $1\le i\le 8$ we set $h_i:=h(1,\ldots,\mu_i/\lambda_i,\ldots,1)$, an element of $T$, where the entry $\mu_i/\lambda_i$ occupies the $i$-th position.
Since $\Ad\,s_{\gamma_i}$ permutes $e_{\pm\gamma_i}$ and fixes $e_{\pm\gamma_j}$ with $j\ne i$, it is straightforward to check that $s_{\gamma_i} h_i\in\Gt_x$.
If $w_0$ is the longest element of $W(\tilde{\Phi})$
then it acts on $\Z\tilde{\Phi}$ as $-{\rm Id}$ and hence lies in $A\subset W\hookrightarrow N_G(T)$. Since $w_0=\prod_{i=1}^8s_{\gamma_i}$ we now deduce that
$n_0:=w_0\big(\prod_{i=1}^8 h_i\big)\in G_x$.

Suppose $\Gt_x\cap N_{\Gt}(T)$ contains an element $n=wh$, where $w\in W(\tilde{\Phi})$ and $h=h(a_1,\ldots, a_8)\in T$, such that $w(\gamma_i)=\gamma_j$ for $i\ne j$. Then $n(e_{\gamma_i})=a_ie_{\gamma_j}$ and
$n(e_{-\gamma_i})=a_i^{-1}e_{-\gamma_j}$ implying that
$\lambda_j=\lambda_ia_i$ and $\mu_j=\mu_ia_i^{-1}$. But then $\lambda_j/\lambda_i=\mu_i/\mu_j$ forcing $\lambda_i\mu_i=\lambda_j\mu_j$ for $i\ne j$. Since $x\in\mathfrak{r}^\circ$ this is false. As $n_0\in G_x$ and $w_0(\pm\gamma_i)=\mp\gamma_i$ for all $i$, this argument shows that $\Gt_x\cap N_{\Gt}(T)=\langle
n_i\,|\,\,1\le i\le 8\rangle$ is isomorphic to an elementary abelian $2$-group of order $2^8$.

Let $\mathcal{A}_{2^k}\cong (\Z/2\Z)^{2^k}$ denote the direct product of $2^k$ copies of $\{\pm 1\}\cong \Z/2\Z$. The group operation
in $\mathcal{A}_{2^k}$ is defined
componentwise. We write $u\bullet v$ for the product of $u,v\in\mathcal{A}_{2^k}$ and denote by $\mathbf{1}_{2^n}$ the identity element of $\mathcal{A}_{2^k}$ (all components of $\mathbf{1}_{2^k}$ are equal to $1$). The set of rows, $R_{2^k}$, of the Hadamard--Sylvester matrix  $H_{2^k}$ may be regarded as a subset $\mathcal{A}_{2^k}$ and easy induction on $k$ shows that
$\pm R_{2^k}$ is a subgroup of $\mathcal{A}_{2^k}$. In particular, $\pm R_8$ is a subgroup of  $\mathcal{A}_8$. As mentioned in \S\ref{Premet.A1} the subgroup $W_0\cong\mathfrak{S}_8$ of the Weyl group $W=W(\tilde{\Phi}_0)$ acts on $\mathcal{A}_8$ by permuting components whereas the normal subgroup
$A\cong(\Z/2\Z)^7$ of $W$ embeds into $\mathcal{A}_8$ and acts on it by translations.

If $n\in G_x$ then $n=wh\in N_G(T)$ and
$w$ preserves $\pm R_8$ setwise.
If $w=a\sigma$, where $\sigma\in W_0$ and $a\in A$, then our discussion in the previous paragraph shows that 
$w(u)=(a\sigma)(u)=\pm u$ for all $u\in \pm R_8$.
Taking $u=\mathbf{1}_8$ we get $\sigma(\mathbf{1}_8)=\mathbf{1}_8$ and $\pm \mathbf{1}_8=w(\mathbf{1}_8)=a\bullet \sigma(\mathbf{1}_8)=a\bullet \mathbf{1}_8=a$. This yields $a=\pm \mathbf{1}_8$
implying that $w\in W_0$ preserves $\pm R_8$. Also, $G_x\cap A$ is a cyclic group of order $2$ generated by $n_0$.

We now consider three commuting involutions $\sigma_1=(1,5)(2,6)(3,7)(4,8)$, $\sigma_2=(1,4)(2,3)(5,8)(6,7)$ and $\sigma_3=(1,2)(3,4)(5,6)(7,8)$ in $W\cong \mathfrak{S}_8$.  One can see by inspection that each of them maps every $r\in R_8$ to $\pm r$.
Hence $\sigma_i\in\langle s_{\gamma_i}\,|\,\,1\le i\le 8\rangle$.
Since $s_{\gamma_i}h_i\in\Gt_x$ for $1\le i\le 8$,  each $\sigma_i$  admits a unique lift
in $G_x\subset N_G(T)$ which will be denoted by $n_i$.
The subgroup $\langle n_i\,|\, 0\le i\le 3\rangle$ of $G_x$ is isomorphic to $(\Z/2\Z)^4$.

Next we show that any element $\sigma h\in G_x$ with $\sigma\in W_0\cong\mathfrak{S}_8$ lies in the subgroup generated by the $n_i$'s. Since $w$ maps $\mathbf{1}_8$ to $\pm\mathbf{1}_8$ and $n_0\in G_x$ we may assume that $w(\mathbf{1}_8)=\mathbf{1}_8$. Since $\sigma$ maps $(\mathbf{1}_4,-\mathbf{1}_4)$ to $\pm(\mathbf{1}_4,-\mathbf{1}_4)$ and $n_1\in G_x$
we may also assume that $\sigma$ fixes $(\mathbf{1}_4,-\mathbf{1}_4)$. Since $\sigma$ maps $(\mathbf{1}_2,-\mathbf{1}_2,\mathbf{1}_2,-\mathbf{1}_2)$ to $\pm(\mathbf{1}_2,-\mathbf{1}_2,\mathbf{1}_2,-\mathbf{1}_2)$ and $n_2\in G_x$
we may  assume that $\sigma$ fixes $(\mathbf{1}_2,-\mathbf{1}_2,\mathbf{1}_2,-\mathbf{1}_2)$ as well. Finally, since $\sigma$ maps
$(1,-1,1,-1,1,-1,1,-1)$ to $\pm(1,-1,1,-1,1,-1,1,-1)$ and $n_3\in G_x$ we may assume that $\sigma$ fixes $(1,-1,1,-1,1,-1,1,-1)$. This entails that $\sigma(i)=i$ for $i\in\{1,2,3,4\}$. As $\sigma(r)=\pm r$ for all $r\in R_8$ the latter shows that $\sigma={\rm id}$ proving statement \eqref{Premet.1} of Theorem~\ref{Premet}.

Since $\g_x$ contains the spanning set $\{x^{[2]^i}\,|\,\,1\le i\le 4\}$ of $\t_0$, our remarks in \S\ref{Premet.A3} show that $\g_x\subset \t$. Since $[t,x]=0$ for every $t\in\g_x$ it must be that $({\rm d}\gamma)_e(t)=0$ for all $\gamma\in\Gamma$. Since $({\rm d}\gamma)_e(t)=\langle h_\gamma,t\rangle$ and $\t_0$ is a maximal isotropic subspace of the symplectic space $\t$, we obtain that
$t\in\t_0$. As a result, $\g_x=\t_0$ for every $x\in\mathfrak{r}^\circ$. Statement \eqref{Premet.2} follows.

In proving statement \eqref{Premet.3} we may assume that
$x=\sum_{i=1}^8\,(\lambda_i e_{\gamma_i}+\mu_i e_{-\gamma_i})$
and $x'=\sum_{i=1}^8\,(\lambda'_i e_{\gamma_i}+\mu'_i e_{-\gamma_i})$ are two elements of
$\mathfrak{r}^\circ$. Our discussion in the previous paragraph shows that $\g_x=\g_{x'}=\t_0$.
Let $h'_i:=h(1,\ldots,\mu'_i/\lambda'_i,\ldots,1)$, where the entry $\mu'_i/\lambda'_i$ occupies the $i$-th position.
There is a unique element $h=h(b_1,\ldots,b_8)\in T$ such that 
\[
h\cdot s_{\gamma_i}h_i\cdot h^{-1}=s_{\gamma_i}h'_i\qquad \quad (1\le i\le 8).
\]
(We need to take $b_i=\sqrt{(\lambda_i\mu_i')/(\lambda'_i\mu_i)}\in k$
for all $1\le i\le 8$.) Our earlier remarks in this section now show that $h\cdot G_x\cdot h^{-1}=G_{x'}$.
This proves statement \eqref{Premet.3}.

\begin{rmk*} 
We stress that
for an element $x=\sum_{i=1}^8\,(\lambda_i e_{\gamma_i}+\mu_i e_{-\gamma_i})$ to be in $\mathfrak{r}^\circ$ it is necessary that
$\lambda_i\mu_i\ne \lambda_j\mu_j$ for all $i\ne j$.
If one removes this condition and only requires that the set $\{x^{[2]^i} \mid 1 \le i\le 4\}\subset\t$ is linearly independent, then one obtains an a priori bigger Zariski open subset,  $\mathfrak{r}'$, in $\mathfrak{r}$
which still has the property that $G_x$ is a finite group and $\g_x=\t_0$ for every $x\in\mathfrak{r}'$. However, it is not immediately clear that the stabilizers in $G$ of any two elements in $\mathfrak{r}'$ are isomorphic. It would be interesting to investigate this situation in more detail.
\end{rmk*}

\subsection*{Scheme-theoretic stabilizers}
Let $\Gtb$ be a reductive group scheme over $k$ with root system $\tilde{\Phi}$ with respect to a maximal torus $
\mathbf{T}\subset\Gtb$ and let
$\mathbf{G}$ be the regular group subscheme of $\Gtb$ with root system $\tilde{\Phi}_0$.
We may assume that $\mathbf{T}(k)=T$, $\Gtb(k)=\Gt$, and $\mathbf{G}(k)=G$. In this situation, we wish to describe the
scheme-theoretic stabilizer $\mathbf{G}_x$ of $x\in\mathfrak{r}^\circ$, an affine group subscheme of $\mathbf{G}$ defined over $k$.

Let $F$ be any commutative associative $k$-algebra with $1$. The subscheme
$N_{\mathbf{G}}(\mathbf{T})$ of $\mathbf{G}$ is smooth
and since $p=2$ we have an isomorphism $N_{\mathbf{G}}(\mathbf{T})=
W\times \mathbf{T}$ of affine group schemes over $k$.
Arguing as in \S\ref{Premet.A4} one observes
that $\mathbf{G}_x(F)$ is contained
in the group of $F$-points of $N_{\mathbf{G}}(\mathbf{T})$. Since the latter contains $G_x=\mathbf{G}_x(k)$ it follows that
$\mathbf{G}_x(F)$ is generated by $G_x=(\mathbf{G}_x)_{\rm red}$ and the scheme-theoretic stabilizer $\mathbf{T}_x$. More precisely,
$$\mathbf{G}_x\cong (\mathbf{G}_x)_{\rm red}\times \mathbf{T}_x\cong (\Z/2\Z)^4\times \mathbf{T}_x$$
as affine group schemes over $k$.
Our concluding remarks in \S\ref{Premet.A2} imply that
the root lattice $\Z\tilde{\Phi}$ contains free
$\Z$-submodules $\Lambda_1$ and $\Lambda_2$ of rank $4$ such that $\Z\tilde{\Phi}=\Lambda_1\oplus \Lambda_2$
and $\Z\Gamma:=\Z\gamma_1\oplus\cdots \oplus \Z\gamma_8\,=\,\Lambda_1\oplus 2\Lambda_2$. Since $\mathbf{T}(F)=\mathrm{Hom}_{\Z}(\Z\tilde{\Phi},F^\times)$, we have a short exact sequence
$$1\to {\rm Hom}_{\Z}(\Lambda_2/2\Lambda_2,F^\times)\to \mathbf{T}(F)\to {\rm Hom}_{\Z}(\Z\Gamma,F^\times)\to 1$$
which shows that the groups $\mathbf{T}_x(F)$ and ${\rm Hom}_{\Z}(\Lambda_2/2\Lambda_2,F^\times)$ are isomorphic.
Since $\Lambda_2/2\Lambda_2\cong (\Z/2\Z)^4$ and ${\rm Hom}_{\Z}(\Z/2\Z,F^\times)=\mu_2(F)$ we have 
${\rm Hom}_{\Z}(\Lambda_2/2\Lambda_2,F^\times)\cong (\mu_2)^4(F)$. Consequently, $\mathbf{T}_x\cong (\mu_2)^4$ as affine group schemes over $k$. This completes the proof of Theorem~\ref{Premet}. $\hfill\qed$

\bibliographystyle{amsalpha}
\bibliography{skip_master}

\providecommand{\bysame}{\leavevmode\hbox to3em{\hrulefill}\thinspace}
\providecommand{\MR}{\relax\ifhmode\unskip\space\fi MR }
\providecommand{\MRhref}[2]{%
  \href{http://www.ams.org/mathscinet-getitem?mr=#1}{#2}
}
\providecommand{\href}[2]{#2}
\begin{thebibliography}{LMMR13}

\bibitem[AP71]{AndreevPopov}
E.M. Andreev and V.L. Popov, \emph{Stationary subgroups of points of general
  position in the representation space of a semisimple {L}ie group}, Functional
  Anal. Appl. \textbf{5} (1971), no.~4, 265--271.

\bibitem[AS76]{AschbacherSeitz}
M.~Aschbacher and G.~Seitz, \emph{Involutions in {C}hevalley groups over fields
  of even order}, Nagoya Math. J. \textbf{63} (1976), 1--91.

\bibitem[BM12]{MR2979508}
Sanghoon Baek and Alexander~S. Merkurjev, \emph{Essential dimension of central
  simple algebras}, Acta Math. \textbf{209} (2012), no.~1, 1--27. \MR{2979508}

\bibitem[Bou02]{Bou:g4}
N.~Bourbaki, \emph{Lie groups and {L}ie algebras: Chapters 4--6},
  Springer-Verlag, Berlin, 2002.

\bibitem[BR97]{BuhRei}
J.~Buhler and Z.~Reichstein, \emph{On the essential dimension of a finite
  group}, Compositio Math. \textbf{106} (1997), 159--179.

\bibitem[BRV10]{BRV}
P.~Brosnan, Z.~Reichstein, and A.~Vistoli, \emph{Essential dimension, spinor
  groups, and quadratic forms}, Ann. of Math. (2) \textbf{171} (2010), no.~1,
  533--544.

\bibitem[BS66]{BoSp1}
A.~Borel and T.A. Springer, \emph{Rationality properties of linear algebraic
  groups}, Algebraic {G}roups and {D}iscontinuous {S}ubgroups ({P}roc.
  {S}ympos. {P}ure {M}ath., {B}oulder, {C}olo., 1965), Amer. Math. Soc.,
  Providence, R.I., 1966, pp.~26--32.

\bibitem[CM93]{CMcG}
D.~Collingwood and W.M. McGovern, \emph{Nilpotent orbits in semisimple {L}ie
  algebras}, Van Nostrant Reinhold, New York, 1993.

\bibitem[CM14]{ChM:edspin}
V.~Chernousov and A.S. Merkurjev, \emph{Essential dimension of spinor and
  {C}lifford groups}, Algebra \& Number Theory \textbf{8} (2014), no.~2,
  457--472.

\bibitem[CS06]{ChSe}
V.~Chernousov and J-P. Serre, \emph{Lower bounds for essential dimensions via
  orthogonal representations}, J. Algebra \textbf{305} (2006), 1055--1070.

\bibitem[DG70]{SGA3.2}
M.~Demazure and A.~Grothendieck, \emph{Sch{\'{e}}mas en groupes {II}: {G}roupes
  de type multiplicatif, et structure des sch{\'{e}}mas en groupes
  g{\'{e}}n{\'{e}}raux}, Lecture Notes in Mathematics, vol. 152, Springer,
  1970.

\bibitem[FGS16]{FGS}
J.~Fulman, R.~Guralnick, and D.~Stanton, \emph{Asymptotics of the number of
  involutions in finite classical groups}, arxiv:1602.03611, 2016.

\bibitem[Flo08]{MR2358058}
M.~Florence, \emph{On the essential dimension of cyclic {$p$}-groups}, Invent.
  Math. \textbf{171} (2008), no.~1, 175--189. \MR{2358058 (2008i:12006)}

\bibitem[Gar98]{G:iso}
S.~Garibaldi, \emph{Isotropic trialitarian algebraic groups}, J. Algebra
  \textbf{210} (1998), 385--418.

\bibitem[Gar09]{G:lens}
\bysame, \emph{Cohomological invariants: exceptional groups and spin groups},
  Memoirs Amer. Math. Soc., no. 937, Amer. Math. Soc., Providence, RI, 2009,
  with an appendix by Detlev W. Hoffmann.

\bibitem[GG15a]{GG:edp}
S.~Garibaldi and R.M. Guralnick, \emph{Essential dimension of exceptional
  groups, including bad characteristic}, preprint, 2015, arxiv:1512.09158.

\bibitem[GG15b]{GG:simple}
\bysame, \emph{Simple groups stabilizing polynomials}, Forum of Mathematics: Pi
  \textbf{3} (2015), e3 (41 pages).

\bibitem[GLLT16]{GurLawther}
R.M. Guralnick, R.~Lawther, M.~Liebeck, and D.~Testerman, \emph{Generic
  stabilizers for actions of simple algebraic groups}, in preparation, 2016.

\bibitem[GLMS97]{GLMS}
R.M. Guralnick, M.W. Liebeck, D.~Macpherson, and G.M. Seitz, \emph{Modules for
  algebraic groups with finitely many orbits on subspaces}, J. Algebra
  \textbf{196} (1997), 211--250.

\bibitem[GR09]{GiRei}
P.~Gille and Z.~Reichstein, \emph{A lower bound on the essential dimension of a
  connected linear group}, Comment. Math. Helv. \textbf{84} (2009), no.~1,
  189--212.

\bibitem[Gue97]{Guerreiro}
M.~Guerreiro, \emph{Exceptional representations of simple algebraic groups in
  prime characteristic}, Ph.D. thesis, University of Manchester, 1997,
  arxiv:1210.6919.

\bibitem[GV78]{GV13}
V.~Gatti and E.~Viniberghi, \emph{Spinors of {$13$}-dimensional space}, Adv. in
  Math. \textbf{30} (1978), no.~2, 137--155.

\bibitem[Igu70]{Igusa}
J.-I. Igusa, \emph{A classification of spinors up to dimension twelve}, Amer.
  J. Math. \textbf{92} (1970), 997--1028.

\bibitem[KM03]{MR1992016}
N.~Karpenko and A.~Merkurjev, \emph{Essential dimension of quadrics}, Invent.
  Math. \textbf{153} (2003), no.~2, 361--372. \MR{1992016 (2004f:11029)}

\bibitem[KM08]{MR2393078}
\bysame, \emph{Essential dimension of finite {$p$}-groups}, Invent. Math.
  \textbf{172} (2008), no.~3, 491--508. \MR{2393078 (2009b:12009)}

\bibitem[KMRT98]{KMRT}
M.-A. Knus, A.S. Merkurjev, M.~Rost, and J.-P. Tignol, \emph{The book of
  involutions}, Colloquium Publications, vol.~44, Amer.\ Math.\ Soc., 1998.

\bibitem[Lie87]{Liebeck:affine}
M.W. Liebeck, \emph{The affine permutation groups of rank 3}, Proc. London
  Math. Soc. \textbf{54} (1987), 477--516.

\bibitem[LMMR13]{MR3039772}
Roland L{\"o}tscher, Mark MacDonald, Aurel Meyer, and Zinovy Reichstein,
  \emph{Essential dimension of algebraic tori}, J. Reine Angew. Math.
  \textbf{677} (2013), 1--13. \MR{3039772}

\bibitem[L{\"{o}}t13]{Loetscher:fiber}
R.~L{\"{o}}tscher, \emph{A fiber dimension theorem for essential and canonical
  dimension}, Compositio Math. \textbf{149} (2013), no.~1, 148--174.

\bibitem[LS12]{LiebeckSeitz}
M.~Liebeck and G.~Seitz, \emph{Unipotent and nilpotent classes in simple
  algebraic groups and {L}ie algebras}, Math. Surveys and Monographs, no. 180,
  Amer. Math. Soc., 2012.

\bibitem[Mer09]{M:chile}
A.~Merkurjev, \emph{Essential dimension}, Quadratic forms--algebra, arithmetic,
  and geometry (R.~Baeza, W.K. Chan, D.W. Hoffmann, and R.~Schulze-Pillot,
  eds.), Contemp. Math., vol. 493, 2009, pp.~299--325.

\bibitem[Mer10]{M:PGLp2}
\bysame, \emph{Essential {$p$}-dimension of {${\rm PGL}(p^2)$}}, J. Amer. Math.
  Soc. \textbf{23} (2010), no.~3, 693--712.

\bibitem[Mer13]{M:ed}
\bysame, \emph{Essential dimension: a survey}, Transform. Groups \textbf{18}
  (2013), no.~2, 415--481.

\bibitem[Mer15a]{M:bbk}
\bysame, \emph{Essential dimension}, S\'eminaire Bourbaki, 67\`eme ann\'ee,
  \#1101, June 2015.

\bibitem[Mer15b]{M:invrr}
\bysame, \emph{Invariants of algebraic groups and retract rationality of
  classifying spaces}, preprint, 2015.

\bibitem[Pop80]{Popov:14}
V.L. Popov, \emph{Classification of spinors of dimension 14}, Trans. Moscow
  Math. Soc. (1980), no.~1, 181--232.

\bibitem[Pop88]{APopov}
A.M. Popov, \emph{Finite isotropy subgroups in general position of simple
  linear {L}ie groups}, Trans. Moscow Math. Soc. (1988), 205--249, [Russian
  original: Trudy Moskov. Mat. Obschch. \textbf{50} (1987), 209--248, 262].

\bibitem[Rei10]{Rei:ICM}
Z.~Reichstein, \emph{Essential dimension}, Proceedings of the International
  Congress of Mathematicians 2010, World Scientific, 2010.

\bibitem[R{\"o}h93]{Roe:certain}
G.~R{\"o}hrle, \emph{On certain stabilizers in algebraic groups}, Comm. Algebra
  \textbf{21} (1993), no.~5, 1631--1644.

\bibitem[Ros99a]{Rost:14.1}
M.~Rost, \emph{On $14$-dimensional quadratic forms, their spinors, and the
  difference of two octonion algebras}, unpublished note, March 1999.

\bibitem[Ros99b]{Rost:14.2}
\bysame, \emph{On the {G}alois cohomology of ${\Spin(14)}$}, unpublished note,
  March 1999.

\bibitem[RY00]{RY}
Z.~Reichstein and B.~Youssin, \emph{Essential dimensions of algebraic groups
  and a resolution theorem for {$G$}-varieties}, Canad. J. Math. \textbf{52}
  (2000), no.~5, 1018--1056, with an appendix by J.~Koll\'ar and E.~Szab\'o.

\bibitem[SF88]{StradeF}
H.~Strade and R.~Farnsteiner, \emph{Modular {L}ie algebras and their
  representations}, Monographs and textbooks in pure and applied math., vol.
  116, Marcel Dekker, New York, 1988.

\bibitem[Ste68]{St}
R.~Steinberg, \emph{Lectures on {C}hevalley groups}, Yale University, New
  Haven, Conn., 1968.

\end{thebibliography}

\end{document}